\theoremstyle{plain}
\newtheorem{thm}{Theorem}
\newtheorem{lem}[thm]{Lemma}
\newtheorem{cor}[thm]{Corollary}
\newtheorem{prop}[thm]{Proposition}
\theoremstyle{definition}
\newtheorem*{example}{Example}
\newtheorem*{ack}{Acknowledgment}
\newtheorem*{notat}{Notations and conventions}
\theoremstyle{remark}
\newcommand{\ZZ}{\mathbb{Z}}
\newcommand{\QQ}{\mathbb{Q}}
\newcommand{\sR}{\mathscr{R}}
\newcommand{\cO}{\mathcal{O}}
\newcommand{\ch}{\operatorname{char}}
\newcommand{\onto}{\twoheadrightarrow}
\newcommand{\into}{\hookrightarrow}
\newcommand{\longiso}{\overset{\text{\raisebox{-.5ex}{$\sim$}}}{\longrightarrow}}
\newcommand{\define}{\ \stackrel{\text{\rm def}}{=}\ }
\newcommand{\cprid}[1]{\boldsymbol{e}({#1})}
\newcommand{\sfrac}[2]{\text{\small $\frac{#1}{#2}$}}
\renewcommand{\k}{\mathbb{k}}
\newcommand{\gen}[1]{\langle{#1}\rangle}
\renewcommand{\bar}[1]{\overline{#1}}
\newcommand{\reg}{\text{\rm reg}}
\newcommand{\Aut}{\operatorname{Aut}}
\newcommand{\End}{\operatorname{End}}
\newcommand{\Hom}{\operatorname{Hom}}
\renewcommand{\Im}{\operatorname{Im}}
\DeclareMathOperator{\Mat}{Mat}
\newcommand{\trace}{\operatorname{trace}}
\newcommand{\Id}{\operatorname{Id}}
\newcommand{\cen}{\mathscr{Z}}
\newcommand{\Th}{^\text{\rm th}}
\renewcommand{\d}{\delta}
\newcommand{\ant}{\mathcal{S}}
\newcommand{\e}{\varepsilon}
\newcommand{\Hint}{\textstyle\int}
\DeclareMathOperator{\Ind}{Ind}
\DeclareMathOperator{\Irr}{Irr}
\newcommand{\rhk}{\text{\scriptsize $\rightharpoonup$}}
\newcommand{\lhk}{\text{\scriptsize $\leftharpoonup$}}
\newcommand{\Cas}[1]{c_{#1}}
\newcommand{\Hig}[1]{\gamma_{#1}}
\newcommand{\cat}[1]{\operatorname{\mathsf{#1}}}
\newcommand{\kalg}{\cat{Alg}_{\k}}
\newcommand{\Rep}[1]{\cat{rep}{#1}}
\newcommand{\Units}[1]{{#1}^{\boldsymbol \times}}
\newcommand{\bdot}{\,\text{\raisebox{-.45ex}{$\boldsymbol{\cdot}$}}\,}
\newcommand{\upin}{\text{\ \rotatebox{90}{$\in$}\ }}
\renewcommand{\emph}[1]{{\bfseries\itshape #1}}
\renewcommand{\labelenumi}{(\alph{enumi})}
\begin{document}

%%%%%%%%%%%%% for preliminary versions  %%%%%%%%%%%%%%%%%%%%%%%%%
% \mbox{}

% \vspace{-1in}

% \hbox{}\hfill\parbox{1.5in}{\textsl{\small working copy\\
%                          \today}}
% \vspace{.5in}
%%%%%%%%%%%%%%%%%%%%%%%%%%%%%%%%%%%%%%%%%%%%%%%%%%

\title[Frobenius Divisibility for Hopf Algebras]%
{Frobenius Divisibility for Hopf Algebras}

\author{Adam Jacoby}

\author{Martin Lorenz}

\address{Department of Mathematics, Temple University,
    Philadelphia, PA 19122}

%\email{lorenz@temple.edu}
%\urladdr{http://www.math.temple.edu/$\stackrel{\sim}{\phantom{.}}$lorenz}

\dedicatory{To Don Passman on the occasion of his $75^\text{th}$
birthday}

\thanks{Research supported in part by NSA Grant H98230-15-1-0036}

\subjclass[2010]{16G99, 16T99}

\keywords{Hopf algebra, symmetric algebra, semisimple algebra, 
irreducible representation, central primitive idempotent, Casimir trace, Casimir element, representation ring,
character of a representation}

\begin{abstract}
We present a unified ring theoretic approach, based on properties of the Casimir element of a symmetric
algebra, to a variety of known divisibility results for the degrees of irreducible representations
of semisimple Hopf algebras in characteristic $0$. 
All these results are motivated by a classical theorem of 
Frobenius on the degrees of irreducible complex representations
of finite groups.
\end{abstract}

\maketitle

%\setcounter{tocdepth}{1}
%\tableofcontents

%%%%%%%%%%%%%%%%%%%%%%%%%%%%%%%%%%%%%%%%%%%%%%%%%%
% Intro
%%%%%%%%%%%%%%%%%%%%%%%%%%%%%%%%%%%%%%%%%%%%%%%%%%

\section*{Introduction}

\subsection{} %\label{SS:}
A result of Frobenius from 1896 \cite[\S 12]{fF96} states that the degree of every irreducible
representation of a finite group $G$ over an algebraically closed field $\k$
characteristic $0$ divides the order of $G$. According to a conjecture
of Kaplansky, the $6\Th$ in his celebrated list of conjectures in \cite{iK75}, an analogous fact is expected to
hold more generally for any finite-dimensional involutory Hopf $\k$-algebra $H$: 
\smallskip

\centerline{\textbf{FD}:\quad the degrees of all
irreducible representations of $H$ divide $\dim_\k H$.}
\smallskip

\noindent
Recall that $H$ is said to be involutory if the antipode 
of $H$ satisfies $\ant^2 = \Id_H$ -- it is a standard fact that, over a base field $\k$ of
characteristic $0$, this condition amounts to semisimplicity of $H$. 
Frobenius' original theorem is the special case of FD where $H$ is the group algebra
of $G$ over $\k$; therefore, the statement FD is referred to as \emph{Frobenius divisibility}. While 
Kaplansky's conjecture remains open as of this writing, it is in fact known to hold in several instances.
It is our aim in this article to present a unified approach
to a number of these generalizations using some general observations on symmetric algebras.
Some of this material can be found in the earlier article \cite{mL11} 
in the slightly more general setting of Frobenius algebras, and the authors have also
greatly benefitted from a reading of the preprint \cite{jCeMxx} by Cuadra and Meir.

\subsection{} %\label{SS:}
Here is a brief overview of the contents of this article. Section~\ref{S:Symmetric} is entirely devoted to
symmetric algebras, with particular focus on the special case of a finite-dimensional semisimple algebra $A$.
The main results, Proposition~\ref{P:cprid} and Theorem~\ref{T:Casimir}, both concern the so-called \emph{Casimir element}
of $A$; the former gives an expression of the central primitive idempotents of $A$ in terms of the Casimir element
while the latter gives a representation theoretic
description of the Casimir element. Under certain hypotheses, these results
allow us to deduce that a version of property FD for $A$ is in fact equivalent to the apparently 
simpler condition that the Casimir element is
integral over $\ZZ$; this is spelled out in Corollary~\ref{C:Divisibility}. 
As a first application, we quickly derive Frobenius' original divisibility 
theorem for finite group algebras in \S\ref{SS:FrobFinitekG}. 

Section~\ref{S:Hopf} then concentrates on a semisimple Hopf algebra $H$ over a field $\k$ of characteristic $0$.
We start with a formulation of Corollary~\ref{C:Divisibility}, due to Cuadra and Meir \cite{jCeMxx}, that is specific to $H$;
the result (Theorem~\ref{T:FrobHopf}) states that FD for $H$ is equivalent to integrality (over $\ZZ$)
of the Casimir element $\Cas\lambda$ of $H$ that is associated to the 
unique integral of $\lambda \in H^*$ satisfying $\gen{\lambda,1} = 1$. Further results derived in Section~\ref{S:Hopf}
from the material on symmetric algebras are: a theorem of Zhu \cite{sZ93} on irreducible representations of $H$
whose character is central in $H^*$, the so-called class equation \cite{gK72}, \cite{yZ94}, and a theorem of 
Schneider \cite{hjS01} on factorizable Hopf algebras. All these results ultimately are consequences of certain
ring theoretic properties of the subalgebra $\sR_\k(H)$ of $H^*$ that is spanned by the characters of representations of $H$:
the algebra $\sR_\k(H)$ is semsimple and defined over $\ZZ$, that is, 
$\sR_\k(H) \cong \k \otimes_{\ZZ} \sR(H)$ for some subring
$\sR(H)$. Moreover, $\sR(H)$ is finitely generated over $\ZZ$ and the Casimir element of  $\sR_\k(H)$
does in fact belong to  $\sR(H)^{\otimes 2}$, thereby ensuring that the requisite integrality property holds.
We emphasize that none of the results in Section~\ref{S:Hopf} are new; 
we take credit only for the presentation and the unified 
approach. However, we hope that the methods of this article and the point of view promulgated here
will prove useful in making further progress
toward a resolution of Kaplansky's conjecture.

\subsection{} %\label{SS:}
With gratitude and admiration, the authors dedicate this article to Don Passman.
The senior author has benefitted throughout his career
from Don's mathematical insights, his generosity in sharing ideas and his long lasting support and friendship.
This paper bears witness to the fact that Don has profoundly influenced 
generations of algebraists. 

\begin{notat}
Throughout, we work over a base field $\k$ %, $\bdot^* = \Hom_\k(\bdot, \k)$ is the $\k$-linear dual 
and $\otimes = \otimes_\k$\,. For any $\k$-vector space $V$, we let 
$\gen{\bdot , \bdot} \colon V^* \times V \to \k$
denote the evaluation pairing.
The center of a $\k$-algebra $A$ is  denoted by $\cen A$ and 
the unit group by $\Units A$.
Furthermore, $\Irr A$ will denote a full representative set of the isomorphism classes of 
irreducible representations of $A$ and $\Rep A$ is the category of finite-dimensional representations of $A$ or, equivalently, 
finite-dimensional left $A$-modules.
Our notation and terminology concerning Hopf algebras is standard and follows Montgomery \cite{sM93}
and Radford \cite{dR12}.
\end{notat}

%%%%%%%%%%%%%%%%%%%%%%%%%%%%%%%%%%%%%%%%%%%%%%%%%%

\section{Symmetric Algebras} 
\label{S:Symmetric}

\subsection{Symmetric Algebras and Frobenius Forms}
\label{SS:SymmDef}

Every $\k$-algebra $A$ carries the ``regular'' $(A,A)$-bimodule structure: the left action of $a \in A$ 
on $A$ is given by the left multiplication operator, $a_A$, and the right
action by right multiplication, ${}_Aa$.
This structure gives rise to a bimodule structure on the linear dual $A^* = \Hom_\k(A,\k)$
for which the following notation is customary in the Hopf literature:
\begin{equation*} 
a \rhk f \lhk b \define f \circ b_A \circ {}_Aa 
\qquad \text{or} \qquad
\gen{a \rhk f \lhk b , c} = \gen{f, bca}
\end{equation*}
for $a,b,c \in A$ and $f \in A^*$. 
The algebra $A$ is said to be \emph{symmetric} if $A\cong A^*$ as $(A,A)$-bimodules.
Note that even a mere $\k$-linear isomorphism $A^* \cong A$ forces $A$ to be finite-dimensional;
so symmetric algebras will necessarily have to be finite-dimensional. 
\smallskip

The image of $1\in A$ under any $(A,A)$-bimodule isomorphism $A \longiso A^*$ is a linear
form $\lambda \in A^*$ satisfying $a \rhk \lambda = \lambda \lhk a$ for all $a\in A$ or,
equivalently, $\gen{\lambda,ab} = \gen{\lambda,ba}$ for all $a,b\in A$. Linear forms 
satisfying this condition will be called \emph{trace forms}. 
Moreover, $a \rhk \lambda = 0$ forces $a=0$, which is equivalent to the
fact that $\lambda$ does not vanish on any nonzero ideal of $A$; this condition will be referred to as 
\emph{nondegeneracy}. Conversely, if $A$ is a 
finite-dimensional $\k$-algebra that is equipped with a nondegenerate trace form $\lambda \in A^*$, 
then we obtain an $(A,A)$-bimodule
isomorphism
\begin{equation}
\label{E:Iso}
\begin{tikzpicture}[baseline=(current  bounding  box.center),  >=latex, scale=.7,
bij/.style={above,sloped,inner sep=0.5pt}]
\matrix (m) [matrix of math nodes, row sep=.1em,
column sep=2.5em, text height=1.5ex, text depth=0.25ex]
{A & A^*  \\ 
\upin & \upin \\ 
a &   a \rhk \lambda = \lambda \lhk a\\}; 
\draw[->] 
(m-1-1) edge node[bij] {$\sim$}  (m-1-2);
\draw[|->] 
(m-3-1) edge (m-3-2);
\end{tikzpicture} 
\end{equation}
Thus, $A$ is symmetric. We will refer to $\lambda$ as a \emph{Frobenius form} for $A$. Note that
$\lambda$ is determined up to multiplication by a central unit of $A$: the possible Frobenius
forms of $A$ are given by 
\[
\lambda' = u \rhk \lambda \qquad\text{with}\qquad u \in \cen A \cap \Units A
\]
We will think of a symmetric algebra as a pair
$(A,\lambda)$ consisting of the algebra $A$ together with a fixed Frobenius trace form $\lambda$.
A \emph{homomorphism} $f \colon (A,\lambda) \to (B,\mu)$ of symmetric
algebras is a $\k$-algebra map $f\colon A \to B$ such that $\lambda = \mu \circ f$.

\subsection{First Examples of Symmetric Algebras}
\label{SS:FirstExamples}

To put the above definitions into perspective, we offer the following examples of symmetric algebras. 
Another important example will occur later in this article (\S\ref{SS:ClassEqn}).

\subsubsection{Finite Group Algebras}
\label{SSS:kG}
The group algebra $\k G$ of any finite group $G$ is symmetric. Indeed, any
$a \in \k G$ has the form $a = \sum_{g\in G} \alpha_g g$ with unique scalars $\alpha_g \in \k$
and a Frobenius form for $\k G$ is given by
\[
\lambda(\sum_{g\in G} \alpha_g g) \define \alpha_1
\]
It is straightforward to check that $\lambda$ is indeed a non-degenerate trace form.

\subsubsection{Finite-dimensional Semisimple Algebras}
\label{SSS:SS}
Any finite-dimensional semisimple algebra $A$ is symmetric. In order to obtain a Frobenius form
for $A$, it suffices to construct a nonzero trace form for all simple Wedderburn
components of $A$; the sum of these trace forms will then be a Frobenius form for $A$.
Thus, we may assume that $A$ is a
finite-dimensional simple $\k$-algebra. Letting $\bar{K}$ denote an
algebraic closure of the $\k$-field $K= \cen A$, we have $A \otimes_K\bar{K}\cong \Mat_d(\bar{K})$
for some $d$. The ordinary matrix trace $\Mat_d(\bar{K}) \to \bar{K}$
yields a nonzero $\k$-linear map
$A \into A \otimes_K\bar{K} \to \bar{K}$
that vanishes on the space $[A,A]$ of all Lie commutators in $A$. Thus, $A/[A,A] \neq 0$
and we may pick any $0 \neq \lambda_0 \in (A/[A,A])^*$ to obtain a
nonzero trace form $\lambda = \lambda_0 \circ \text{can} \colon A \onto A/[A,A] \to \k$. Non-degeneracy of
$\lambda$ is clear, because $A$ has no nonzero ideals.

\subsection{The Casimir Element}
\label{SS:Casimir}

Let $(A,\lambda)$ be a symmetric algebra. The element $c_\lambda \in A\otimes A$ that corresponds
to $\Id_A \in \End_\k(A)$ under the 
isomorphism $\End_\k(A) \cong A\otimes A$ coming from \eqref{E:Iso} is called the
\emph{Casimir element} of $(A,\lambda)$:
\begin{equation}
\label{E:Casimir}
\begin{tikzpicture}[baseline=(current  bounding  box.center),  >=latex, scale=.7,
bij/.style={above,sloped,inner sep=0.5pt}]
\matrix (m) [matrix of math nodes, row sep=.1em,
column sep=2.5em, text height=1.5ex, text depth=0.25ex]
{\End_\k(A) & A \otimes A^* & A \otimes A \\ 
\upin & & \upin \\ 
\Id_A &  & \Cas{\lambda}  \\}; 
\draw[->] 
(m-1-1) edge node[bij] {$\sim$} node[below] {\scriptsize can.} (m-1-2)
(m-1-2) edge node[bij] {$\sim$} node[below] {\scriptsize \eqref{E:Iso}} (m-1-3);
\draw[|->] 
(m-3-1) edge (m-3-3);
\end{tikzpicture} 
\end{equation}
Writing $\Cas{\lambda} = \sum_i x_i \otimes y_i$ with $\{ x_i\}$ $\k$-linearly independent, \eqref{E:Casimir} means 
explicitly that $a = \sum_i x_i \gen{\lambda, ay_i}$ for all $a \in A$ or, equivalently,
the $x_i$ form a $\k$-basis of $A$ 
such that $\gen{\lambda,x_iy_j} = \d_{i,j}$ for all $i,j$. It then follows that the $y_i$ also form a
$\k$-basis of $A$ satisfying $\gen{\lambda,y_ix_j} = \d_{i,j}$. Hence,
\begin{equation}
\label{E:CasimirSymm}
\Cas{\lambda} = \sum_i x_i \otimes y_i = \sum y_i \otimes x_i
\end{equation}
%The elements $(x_i,y_i)$ will be called \emph{dual bases} of $A$ with respect to $\lambda$.  
Thus, the Casimir element $\Cas{\lambda}$ is fixed by the switch map 
$\tau \in \Aut_{\kalg}(A \otimes A)$ given by $\tau(a\otimes b) = b \otimes a$.

\begin{lem}
\label{L:Casimir}
Let $(A,\lambda)$ be a symmetric algebra. Then, for all $z \in A \otimes A$, we have 
$z \Cas{\lambda} \ = \ \Cas{\lambda} \tau(z)$. 
Consequently, $\Cas{\lambda}^2 \in \cen(A\otimes A) = \cen A \otimes \cen A$\,.
\end{lem}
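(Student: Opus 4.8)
The plan is to exploit the defining property of $\Cas{\lambda}$ together with the symmetry \eqref{E:CasimirSymm}. Write $\Cas{\lambda} = \sum_i x_i \otimes y_i$ with $\{x_i\}$ a $\k$-basis of $A$ satisfying $\gen{\lambda, x_i y_j} = \d_{i,j}$. The key identity to establish first is the elementary ``sliding'' relation: for any $a \in A$,
\[
\sum_i a x_i \otimes y_i = \sum_i x_i \otimes y_i a.
\]
This follows by pairing the second tensor factor of the left-hand side against $\lambda \lhk b$ for arbitrary $b$, using $\gen{\lambda, y_i b} $ in place of coordinates: concretely, from $a = \sum_i x_i \gen{\lambda, a y_i}$ one gets $\sum_i (a x_i) \gen{\lambda, c y_i} = a c$ and $\sum_i x_i \gen{\lambda, c y_i a} = \sum_i x_i \gen{\lambda, (ac) y_i} = ac$ (using the trace property $\gen{\lambda, c y_i a} = \gen{\lambda, a c y_i}$), so both sides have the same ``coordinates'' with respect to the dual setup and hence agree. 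Equivalently one checks it directly on the basis by computing $\gen{\lambda, (\text{slot}) x_j}$.

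Next I would upgrade this to the full statement $z\Cas{\lambda} = \Cas{\lambda}\tau(z)$ for all $z \in A \otimes A$. By $\k$-bilinearity it suffices to treat $z = a \otimes b$ with $a, b \in A$. Then
\[
(a \otimes b)\Cas{\lambda} = \sum_i a x_i \otimes b y_i,
\qquad
\Cas{\lambda}\tau(a \otimes b) = \Cas{\lambda}(b \otimes a) = \sum_i x_i b \otimes y_i a.
\]
Applying the sliding relation in the first tensor factor to move $a$ past the $x_i$, and simultaneously invoking the mirror-image sliding relation obtained from the second expression in \eqref{E:CasimirSymm} (namely $\sum_i y_i \otimes b x_i = \sum_i a y_i \otimes x_i$, i.e.\ sliding on the $\sum_i y_i \otimes x_i$ presentation), I can rewrite both sides to a common form. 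The cleanest route: from $\Cas{\lambda} = \sum_i y_i \otimes x_i$, the sliding relation gives $\sum_i y_i \otimes x_i b = \sum_i y_i b' \otimes x_i$ appropriately; combining the two sliding relations lets one pass $a$ across on the left and $b$ across on the right, and the switch in $\tau$ is exactly what makes the two match. I would organize this as: slide $b$ on the right using one presentation, slide $a$ on the left using the other, and compare.

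Finally, the consequence $\Cas{\lambda}^2 \in \cen(A \otimes A)$ is immediate: apply the identity with $z = \Cas{\lambda}$, which is $\tau$-invariant by \eqref{E:CasimirSymm}, to get $\Cas{\lambda}\cdot\Cas{\lambda} = \Cas{\lambda}\,\tau(\Cas{\lambda}) = \Cas{\lambda}\cdot\Cas{\lambda}$ — that alone is a tautology, so instead I use it as follows: for any $z$, $z\,\Cas{\lambda}^2 = (z\Cas{\lambda})\Cas{\lambda} = (\Cas{\lambda}\tau(z))\Cas{\lambda}$, and then apply the identity again with $\tau(z)$ in place of $z$ but now multiplying $\Cas{\lambda}$ on its right: $\Cas{\lambda}\tau(z)\cdot\Cas{\lambda}$. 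Writing $w = \tau(z)$, the identity reads $w\Cas{\lambda} = \Cas{\lambda}\tau(w) = \Cas{\lambda} z$ — wait, more carefully: $(\Cas{\lambda}\tau(z))\Cas{\lambda} = \Cas{\lambda}(\tau(z)\Cas{\lambda}) = \Cas{\lambda}(\Cas{\lambda}\tau^2(z)) = \Cas{\lambda}^2 z$, using $\tau^2 = \Id$. Hence $z\Cas{\lambda}^2 = \Cas{\lambda}^2 z$ for all $z$, so $\Cas{\lambda}^2$ is central; and $\cen(A\otimes A) = \cen A \otimes \cen A$ is standard for finite-dimensional $\k$-algebras. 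The main obstacle is purely bookkeeping: keeping straight which tensor slot each sliding relation acts on and ensuring the $\tau$ is inserted on the correct side — the mathematical content is light once the sliding relation and \eqref{E:CasimirSymm} are in hand.
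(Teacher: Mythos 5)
Your proposal is correct and follows essentially the same route as the paper: first establish the sliding relation $(a\otimes 1)\Cas{\lambda}=\Cas{\lambda}(1\otimes a)$ from the defining property of the dual bases together with the trace property of $\lambda$, then pass to the second tensor slot via the $\tau$-invariance of $\Cas{\lambda}$ from \eqref{E:CasimirSymm}, and finally deduce $z\Cas{\lambda}^2=\Cas{\lambda}^2z$ exactly as you do (using $\tau^2=\Id$). One small caution: the ``mirror-image sliding relation'' you display, $\sum_i y_i\otimes bx_i=\sum_i ay_i\otimes x_i$, is garbled, and even corrected, sliding on the presentation $\sum_i y_i\otimes x_i$ merely reproduces the identity $(a\otimes 1)\Cas{\lambda}=\Cas{\lambda}(1\otimes a)$; the relation you actually need, $(1\otimes b)\Cas{\lambda}=\Cas{\lambda}(b\otimes 1)$, is obtained by applying the algebra automorphism $\tau$ to the first identity and using $\tau(\Cas{\lambda})=\Cas{\lambda}$ --- which is what the paper does and what your phrase ``the switch in $\tau$ is exactly what makes the two match'' should be made to say precisely.
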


\begin{proof}
Recall that $a = \sum_i x_i \gen{\lambda, ay_i} = \sum_i y_i \gen{\lambda, ax_i}$ for all $a \in A$. Using this, we
compute
\[
\sum_i ax_i \otimes y_i =
\sum_{i,j} x_j\gen{\lambda, ax_iy_j} \otimes y_i =
\sum_{i,j} x_j  \otimes y_i \gen{\lambda, y_j ax_i} =
\sum_j x_j \otimes y_j a
\]
Thus, $(a \otimes 1)\Cas\lambda = \Cas\lambda(1\otimes a)$. Applying the switch automorphism 
$\tau$ to this equation
and using the fact that $\Cas\lambda$ is stable under $\tau$, we also obtain
$(1 \otimes b)\Cas\lambda = \Cas\lambda(b \otimes 1)$. Hence, 
$(a \otimes b) \Cas{\lambda} \ = \ \Cas{\lambda}(b \otimes a)$,
which implies $\Cas{\lambda}^2 \in \cen(A\otimes A)$.
\end{proof}

\subsection{The Casimir Trace}
\label{SS:CasimirTr}

The following operator was originally introduced by D.G.~Higman \cite{dH55}:
\begin{equation}
\label{E:CasTr}
\begin{tikzpicture}[baseline=(current  bounding  box.center),  >=latex, scale=.7]
\matrix (m) [matrix of math nodes, row sep=.1em,
column sep=2em, text height=1.5ex, text depth=0.25ex]
{\Hig{\lambda} \colon \quad A &  A/[A,A] & \cen A   \\ \hspace{.25in} \quad \upin &&  \upin \\ 
\hspace{.4in} a  &   & \sum_i x_i a y_i = \sum_i y_i a x_i\\};
\draw[->>] (m-1-1) edge node[auto] {\scriptsize can.} (m-1-2);
\draw[->] (m-1-2) edge (m-1-3);
\draw[|->] (m-3-1) edge (m-3-3);
\end{tikzpicture}
\end{equation}
The following lemma justifies the claims, implicit in \eqref{E:CasTr}, that  
$\Hig\lambda$ is center-valued and vanishes on $[A,A]$.
We will refer to $\Hig{\lambda}$ as the \emph{Casimir trace} of $(A,\lambda)$.

\begin{lem}
\label{L:HigCas}
Let $(A,\lambda)$ be a symmetric algebra. Then
$a\Hig{\lambda}(bc) = \Hig{\lambda}(cb)a$ for all $a,b,c \in A$.
\end{lem}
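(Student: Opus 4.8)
The plan is to derive everything from Lemma~\ref{L:Casimir}, which gives $z\Cas\lambda = \Cas\lambda\,\tau(z)$ for all $z \in A \otimes A$. First I would apply this to a simple tensor $z = p \otimes q$ and then multiply on the right by a second simple tensor $r \otimes s$, computing the element $(p \otimes q)\,\Cas\lambda\,(r \otimes s)$ of $A \otimes A$ in two different ways. Expanding $\Cas\lambda = \sum_i x_i \otimes y_i$ directly gives
\[
(p \otimes q)\,\Cas\lambda\,(r \otimes s) \;=\; \sum_i p x_i r \otimes q y_i s .
\]
Pushing $p \otimes q$ across $\Cas\lambda$ by Lemma~\ref{L:Casimir} instead turns it into $\Cas\lambda\,(q \otimes p)$, and since $(q \otimes p)(r \otimes s) = qr \otimes ps$ this yields
\[
(p \otimes q)\,\Cas\lambda\,(r \otimes s) \;=\; \Cas\lambda\,(qr \otimes ps) \;=\; \sum_i x_i qr \otimes y_i ps .
\]
Applying the multiplication map $A \otimes A \to A$, $u \otimes v \mapsto uv$, to both descriptions then produces the single identity $\sum_i p x_i r q y_i s = \sum_i x_i qr y_i ps$, valid for all $p,q,r,s \in A$.

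To conclude, I would specialize $p = a$, $q = c$, $r = b$ and $s = 1$. The left-hand side collapses to $\sum_i a x_i (bc) y_i = a\,\Hig\lambda(bc)$ and the right-hand side to $\sum_i x_i (cb) y_i\, a = \Hig\lambda(cb)\,a$, which is precisely the asserted equality. Taking in addition $a = 1$ gives $\Hig\lambda(bc) = \Hig\lambda(cb)$, so that $\Hig\lambda$ vanishes on $[A,A]$; feeding this back into the identity shows $\Hig\lambda(bc)$ commutes with every element of $A$, so $\Hig\lambda$ is center-valued. This is exactly what makes the diagram \eqref{E:CasTr} meaningful.

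The computation is routine once set up; the one delicate point is the bookkeeping decision to split $b$ and $c$ between the two slots of the auxiliary tensor — taking $z = a \otimes c$ and introducing $b$ through the right-hand factor $r \otimes s$ — rather than keeping $bc$ together as $z = a \otimes bc$. It is the switch $\tau$ in Lemma~\ref{L:Casimir} that then slides $c$ next to $b$ and produces the transposed product $cb$ on the right; the alternative \emph{block} choice only reproves that $\Hig\lambda(bc)$ is central and never exhibits the transposition. Note that the symmetry $\Cas\lambda = \sum_i x_i \otimes y_i = \sum_i y_i \otimes x_i$ of \eqref{E:CasimirSymm} is not actually needed for this argument.
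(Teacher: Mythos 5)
Your proposal is correct and follows essentially the same route as the paper: both start from the identity $(a\otimes b)\,\Cas\lambda=\Cas\lambda\,(b\otimes a)$ of Lemma~\ref{L:Casimir}, multiply on the right by a suitable simple tensor, and apply the multiplication map $A\otimes A\to A$; your choice $z=a\otimes c$ with right factor $b\otimes 1$ is just a relabeling of the paper's $z=a\otimes b$ with right factor $c\otimes 1$. The extra observations (that $\Hig\lambda$ kills $[A,A]$ and is center-valued, and that \eqref{E:CasimirSymm} is not needed once Lemma~\ref{L:Casimir} is taken as a black box) are accurate.
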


\begin{proof}
The identity in Lemma~\ref{L:Casimir} states that
$\sum_i ax_i \otimes by_i = \sum_i x_i b \otimes y_i a$ in $A \otimes A$.
Multiplying this identity on the right with $c\otimes 1$
and then applying the multiplication map $A \otimes A \to A$ gives
$\sum_i ax_ic by_i = \sum_i x_i bc y_i a$ or, equivalently,
$a\Hig{\lambda}(cb) = \Hig{\lambda}(bc)a$ as claimed.
\end{proof}

\subsection{A Trace Formula}
\label{SS:Trace}

The Casimir element $\Cas\lambda$ can be used to give a convenient trace formula for endomorphisms of $A$:

\begin{lem}
\label{L:FrobTraces}
Let $(A,\lambda)$ be a symmetric algebra with Casimir element $\Cas\lambda = \sum_i x_i \otimes y_i$.
Then, for any $f \in \End_\k(A)$, we have
$\trace(f) = \sum_i \gen{\lambda, f(x_i )y_i} = \sum_i \gen{\lambda, x_i f(y_i)}$.
\end{lem}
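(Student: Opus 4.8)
The plan is to unwind the definition of the Casimir element \eqref{E:Casimir}, which identifies $\Cas\lambda = \sum_i x_i \otimes y_i$ with $\Id_A \in \End_\k(A)$ under the chain of isomorphisms $\End_\k(A) \cong A \otimes A^* \cong A \otimes A$. Concretely, as recorded just after \eqref{E:Casimir}, this says that $\{x_i\}$ is a $\k$-basis of $A$ with dual basis $\{y_i\}$ relative to the pairing $(a,b) \mapsto \gen{\lambda, ab}$, i.e.\ $\gen{\lambda, x_i y_j} = \d_{i,j}$ (and likewise $\gen{\lambda, y_i x_j} = \d_{i,j}$ by \eqref{E:CasimirSymm}). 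So the proof is essentially the standard linear-algebra fact that the trace of an endomorphism can be computed by pairing it against a basis and its dual basis.

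First I would expand $f(x_i)$ in the basis $\{x_j\}$: write $f(x_i) = \sum_j a_{ji} x_j$, so that $\trace(f) = \sum_i a_{ii}$ by definition of the matrix trace. Then, using $\gen{\lambda, x_j y_i} = \d_{j,i}$, I compute
\[
\sum_i \gen{\lambda, f(x_i) y_i} = \sum_{i,j} a_{ji} \gen{\lambda, x_j y_i} = \sum_{i,j} a_{ji}\,\d_{j,i} = \sum_i a_{ii} = \trace(f),
\]
which gives the first equality. For the second equality, I would run the same argument with the roles of $\{x_i\}$ and $\{y_i\}$ interchanged: by \eqref{E:CasimirSymm} we also have $\Cas\lambda = \sum_i y_i \otimes x_i$ with $\{y_i\}$ a basis and $\{x_i\}$ its dual basis for the pairing $(a,b)\mapsto \gen{\lambda, ab}$ (since $\gen{\lambda, y_i x_j} = \d_{i,j}$). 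Expanding $f(y_i) = \sum_j b_{ji} y_j$ and noting that $\trace(f) = \sum_i b_{ii}$ as well, the identical computation yields $\sum_i \gen{\lambda, y_i f(x_i)}$ — wait, more precisely $\sum_i \gen{\lambda, f(y_i) x_i} = \trace(f)$; and since $\lambda$ is a trace form, $\gen{\lambda, f(y_i) x_i} = \gen{\lambda, x_i f(y_i)}$, giving the stated form.

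There is really no serious obstacle here: the only point requiring a moment's care is making sure the two displayed expressions in the lemma are both recognized as ``pair a basis against its $\lambda$-dual basis,'' which is exactly what \eqref{E:CasimirSymm} together with the dual-basis characterization of $\Cas\lambda$ provides. One could alternatively phrase the whole argument more invariantly: the canonical isomorphism $\End_\k(A) \cong A \otimes A^*$ sends $f$ to an element whose image under the evaluation/contraction map $A \otimes A^* \to \k$ is $\trace(f)$, and under \eqref{E:Iso} contraction $A \otimes A^* \to \k$ becomes the map $A \otimes A \to \k$, $a \otimes b \mapsto \gen{\lambda, ab}$; applying this to $(\Id_A \otimes f)(\Cas\lambda)$ and to $(f \otimes \Id_A)(\Cas\lambda)$ yields the two formulas at once. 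Either way the proof is short; I would likely present the basis computation since it is the most transparent.
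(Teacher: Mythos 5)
Your proposal is correct and is essentially the paper's argument: the paper also reads off the matrix of $f$ from $f(a)=\sum_i f(x_i)\gen{\lambda,ay_i}$ and identifies the trace as the contraction $A\otimes A^*\to\k$ (the ``invariant'' phrasing you mention at the end), while your diagonal-sum computation with $\gen{\lambda,x_iy_j}=\d_{i,j}$ is the same calculation written out in coordinates. Your handling of the second equality --- applying the first to the presentation $\Cas\lambda=\sum_i y_i\otimes x_i$ from \eqref{E:CasimirSymm} and then using that $\lambda$ is a trace form --- matches what the paper's ``the second follows from \eqref{E:CasimirSymm}'' implicitly requires.
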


\begin{proof}
By \eqref{E:Casimir}, $f(a) = \sum_i f(x_i) \gen{\lambda, ay_i}$ for all $a \in A$. Thus, 
\begin{equation*}
\begin{tikzpicture}[baseline=(current  bounding  box.center),  >=latex, scale=.7,
bij/.style={above,sloped,inner sep=0.5pt}]
\matrix (m) [matrix of math nodes, row sep=.1em,
column sep=2.2em, text height=1.5ex, text depth=0.25ex]
{\trace \colon &[-2em] \End_\k(A) & A \otimes A^*  & \k \\ 
& \upin & \upin & \upin \\ 
& f &  \sum_i f(x_i) \otimes (y_i \rhk \lambda) & \sum_i \gen{\lambda, f(x_i )y_i} \\}; 
\draw[->] 
(m-1-2) edge node[bij] {$\sim$} node[below] {\scriptsize can.} (m-1-3)
(m-1-3) edge node[below] {\scriptsize evaluation} (m-1-4);
\draw[|->] 
(m-3-2) edge (m-3-3)
(m-3-3) edge (m-3-4);
\end{tikzpicture} 
\end{equation*}
This proves the first equality; the second follows from \eqref{E:CasimirSymm}.
\end{proof}

With $f = b_A \circ {}_Aa$ for $a,b \in A$,  Lemma~\ref{L:FrobTraces} gives the formula
\begin{equation*}
%\label{E:Traces.1}
\trace(b_A \circ {}_Aa) = \langle \lambda, b \Hig{\lambda}(a) \rangle 
= \langle \lambda, \Hig{\lambda}(b)a \rangle = \trace(a_A \circ {}_Ab)
\end{equation*}
In particular, we obtain the following expressions for the
\emph{regular character} of $A$: 
\begin{equation}
\label{E:Traces.2}
\gen{\chi_\reg,a} \define \trace(a_A)  = \trace({}_Aa) \\
= \gen{\lambda, \Hig{\lambda}(a)}
= \gen{\lambda, \Hig{\lambda}(1)a} 
%= \gen{\lambda, a\Hig{\lambda}(1)}
\end{equation}

\subsection{Primitive Central Idempotents}
\label{SS:CentralPrimitive}

Now let $A$ be a finite-dimensional semisimple $\k$-algebra and let 
$\Irr A$ denote a full representative set of the isomorphism classes of 
irreducible representations of $A$. For each $S \in \Irr A$, we let $D(S) = \End_A(S)$ denote the 
Schur division algebra of $S$ and $a_S \in \End_{D(S)}(S)$ the operator given by the action of $a$ on $S$.
Consider the Wedderburn isomorphism
\begin{equation}
\label{E:Wedd}
\begin{tikzpicture}[baseline=(current  bounding  box.center),  >=latex, scale=.7,
bij/.style={above,sloped,inner sep=0.5pt}, text height=1.5ex, text depth=0.25ex]
\node(11) at (-2.3,-.2){$a$};
\node(12) at (1.1,-.2){$\big( a_S \big)$};
\node(21) at (-2.3,.8){$\upin$};
\node(22) at (1.1,.8){$\upin$};
\node(31) at (-2.3,2){$A$};
\node(32) at (1.1,2){$\displaystyle \prod_{S \in \Irr A} \End_{D(S)}(S)$};
%\node(33) at (3.3,2){$\cong$};
%\node(34) at (5.3,2){$\displaystyle \prod_{S \in \Irr A} \Mat_{\dim_\k\!S}(\k)$};
\draw[->] (31) edge  node[bij] {$\sim$}  (32);
\draw[|->] (11) edge (12);
\end{tikzpicture}
\end{equation}
%For $a \in \cen A$, the operators $a_S$ are scalars; a particularly important role will be played by the
%scalars $\Hig{\lambda}(1)_S \in \k$.
The \emph{primitive central idempotent} $\cprid{S} \in \cen A$ 
is the element corresponding to 
$(0,\dots,0,\Id_S,0,\dots,0) \in \prod_{S \in \Irr A} \End_{D(S)}(S)$ under the this isomorphism; so
% $\cen A = \bigoplus_{S \in \Irr A} \k \cprid{S}$ and
\[
\cprid{S}_{T} = \delta_{S,T} \Id_S \qquad (S,T \in \Irr A)
\]
The following proposition gives a formula for $\cprid{S}$ using 
data coming from the structure of $A$
as a symmetric algebra (\S\ref{SSS:SS}) and the \emph{character} $\chi_S$ of $S$, defined by
\[
\gen{\chi_S,a} = \trace( a_S ) \qquad (a \in A)
\]

\begin{prop}
\label{P:cprid}
Let $A$ be a finite-dimensional semisimple $\k$-algebra with Frobenius trace form $\lambda$\,.
Then, for each $S \in \Irr A$\,, we have the following formula in $A = \k \otimes A$:
\[
\Hig{\lambda}(1)\, \cprid{S} = d(S)\, (\chi_S \otimes \Id_A)(\Cas{\lambda})
= d(S)\, (\Id_A \otimes \chi_S)(\Cas{\lambda})
\]
where $d(S) = \dim_{D(S)}S$. In particular, $\Hig{\lambda}(1)_S = 0$ if and only if $\chi_S = 0$ 
or $d(S) \,1_\k = 0$.
\end{prop}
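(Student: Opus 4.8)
The plan is to evaluate the Casimir-trace element $\Hig{\lambda}(1) = \sum_i x_i y_i$ against each Wedderburn component and compare with the claimed right-hand sides. First I would reduce to computing the $S$-component of every element appearing: since $\cprid{S}$ is central and acts as $\Id_S$ on $S$ and as $0$ on $T \neq S$, the identity $\Hig{\lambda}(1)\,\cprid{S} = d(S)(\chi_S \otimes \Id_A)(\Cas\lambda)$ is equivalent to the pair of componentwise identities $\Hig{\lambda}(1)_S = d(S)\,(\chi_S \otimes \Id_A)(\Cas\lambda)_S$ and $0 = d(S)\,(\chi_S \otimes \Id_A)(\Cas\lambda)_T$ for $T \neq S$; the latter is immediate because applying $\chi_S$ to the first tensor leg picks out only the part of $\Cas\lambda$ living in the $S$-block $\otimes$ anything, and that part already lies in the $S$-block in the second leg (this uses Lemma~\ref{L:Casimir}, which shows $\Cas\lambda$ commutes appropriately, forcing it to be block-diagonal in the sense $\Cas\lambda = \sum_S \cprid{S}\Cas\lambda\cprid{S}$, or equivalently $\Cas\lambda \in \bigoplus_S (A\cprid{S})^{\otimes 2}$).

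Next I would do the main computation inside a single simple component, i.e.\ assume $A = \End_{D}(S)$ with $D = D(S)$. Here $\Hig{\lambda}(1) = \sum_i x_i y_i$ is a central element, hence a scalar $\mu\cdot\Id_S$ (after extending scalars if $D \neq \k$), and by the trace formula Lemma~\ref{L:FrobTraces} applied with $f = \Id_A$ one reads off $\trace(\Id_A) = \dim_\k A = \sum_i \gen{\lambda, x_i y_i} = \gen{\lambda, \Hig\lambda(1)}$. On the other hand $(\chi_S \otimes \Id_A)(\Cas\lambda) = \sum_i \gen{\chi_S, x_i} y_i$, and I would identify this scalar (again central, hence a multiple of $\Id_S$) by pairing with $\lambda$: $\gen{\lambda, \sum_i \gen{\chi_S,x_i}y_i} = \sum_i \gen{\chi_S, x_i}\gen{\lambda, y_i}$. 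The key linear-algebra fact is that, because $\{x_i\}$ and $\{y_i\}$ are dual bases with respect to the bilinear form $(a,b)\mapsto \gen{\lambda, ab}$, for any linear functional $\phi$ on $A$ one has $\sum_i \gen{\phi, x_i} y_i$ equal to the unique element $z_\phi$ with $\gen{\lambda, z_\phi a} = \gen{\phi, a}$ for all $a$; applying this with $\phi = \chi_S$ and $\phi = \chi_\reg$ and using the classical relation $\chi_\reg = d(S)\,\chi_S$ on a simple algebra (valid over any field via $\chi_\reg = \dim_D S \cdot \chi_S$ when $A = \End_D S$), together with the already-derived $\Hig\lambda(1) = z_{\chi_\reg}$ from \eqref{E:Traces.2}, yields $\Hig\lambda(1) = d(S)\,(\chi_S\otimes\Id_A)(\Cas\lambda)$ in that block. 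The second equality in the Proposition then follows from the symmetry $\Cas\lambda = \sum_i y_i \otimes x_i$ of \eqref{E:CasimirSymm}.

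For the ``in particular'' clause I would argue as follows: we have just shown $\Hig\lambda(1)_S = d(S)\,(\chi_S\otimes\Id_A)(\Cas\lambda)_S$, so $\Hig\lambda(1)_S = 0$ iff $d(S)\,(\chi_S\otimes\Id_A)(\Cas\lambda)_S = 0$. If $d(S)\,1_\k = 0$ this vanishes trivially. If $d(S)\,1_\k \neq 0$, then $\Hig\lambda(1)_S = 0$ iff $(\chi_S\otimes\Id_A)(\Cas\lambda)_S = 0$; pairing the latter with $\lambda$ and running the identity of the previous paragraph backwards shows this forces $\chi_S = 0$ on the $S$-block, hence (since $\chi_S$ is supported there) $\chi_S = 0$ altogether. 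Conversely, if $\chi_S = 0$ then $(\chi_S\otimes\Id_A)(\Cas\lambda) = 0$ directly from the definition, so $\Hig\lambda(1)_S = 0$. This gives the stated equivalence.

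I expect the main obstacle to be the bookkeeping when $D(S) \neq \k$: one must be careful about whether ``$d(S) = \dim_{D(S)} S$'' rather than $\dim_\k S$ is the correct multiplier in $\chi_\reg = d(S)\chi_S$, and the cleanest route is to base-change to $\bar K$ (as in \S\ref{SSS:SS}) where everything becomes matrices, verify the scalar identities there, and descend --- noting that $\chi_S$, $\chi_\reg$, $\Cas\lambda$ and $\Hig\lambda(1)$ are all defined over $\k$ so the identity of $\k$-rational elements follows from its validity over $\bar K$. A secondary subtlety is making the reduction ``$\Cas\lambda$ is block-diagonal'' rigorous, but this is exactly what Lemma~\ref{L:Casimir} delivers: taking $z = \cprid{S}\otimes 1$ gives $(\cprid{S}\otimes 1)\Cas\lambda = \Cas\lambda(1\otimes\cprid{S})$, and iterating with the various $\cprid{T}$ pins down the support of $\Cas\lambda$.
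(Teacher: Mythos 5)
Your argument is correct and is essentially the paper's proof in a slightly different packaging: both rest on the observation that $\Hig{\lambda}(1)$ is the $\lambda$-representative of $\chi_\reg$ (equation \eqref{E:Traces.2}), that $(\chi_S \otimes \Id_A)(\Cas{\lambda})$ is the $\lambda$-representative of $\chi_S$, and that $\chi_\reg \lhk \cprid{S} = d(S)\chi_S$ by Wedderburn; the paper simply verifies the identity globally by pairing with an arbitrary $a \in A$ and invoking nondegeneracy, which makes your block-diagonality reduction (itself the first assertion of Theorem~\ref{T:Casimir}) unnecessary. Your closing suggestion to base-change to $\bar K$ is also not needed --- and would in fact be more delicate than your direct argument, since $S$ and $\cprid{S}$ may split after extension of scalars --- because $A\cprid{S} \cong S^{\oplus \dim_{D(S)}S}$ as left $A$-modules already over $\k$.
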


\begin{proof}
Since $(\chi_S \otimes \Id_A)(\Cas{\lambda})
= (\Id_A \otimes \chi_S)(\Cas{\lambda})$ by \eqref{E:CasimirSymm}, we only need to show that
$\Hig{\lambda}(1)\,\cprid{S} = d(S)\, (\chi_S \otimes \Id_A)(\Cas{\lambda})$. This
amounts to the condition $\gen{\lambda, \Hig{\lambda}(1)\,\cprid{S}\,a} 
= d(S)\, \gen{\lambda, (\chi_S \otimes \Id_A)(\Cas{\lambda})a}$ for all $a \in A$
by nondegeneracy of $\lambda$. But $a = \sum_i x_i\gen{\lambda,y_ia}$ by \eqref{E:CasimirSymm} and so
\[
\gen{\lambda, (\chi_S \otimes \Id_A)(\Cas{\lambda})a} 
\underset{\eqref{E:CasimirSymm}}{=} \gen{\lambda, {\textstyle \sum_i}\, \gen{\chi_S,x_i} y_i a}
= {\textstyle \sum_i}\, \gen{\chi_S,x_i}\gen{\lambda,y_ia} = \gen{\chi_S,a}
\]
Thus, our goal is to show that
\begin{equation}
\label{E:cpridSymm.0}
\gen{\lambda,\Hig{\lambda}(1)\,\cprid{S}\,a}  = d(S)\, \gen{\chi_S,a} \qquad (a \in A)
\end{equation}
For this, we use the regular character:
\[
%\gen{\cprid{S} \rhk \chi_\reg,a} = 
\gen{\chi_\reg,\cprid{S}\,a} \underset{\eqref{E:Traces.2}}{=} 
\gen{\lambda,\Hig{\lambda}(1)\, \cprid{S}\,a} 
%= \gen{\lambda,a \cprid{S} \Hig{\lambda}(1)_S} 
%= \gen{\lambda,a \cprid{S}}\, \Hig{\lambda}(1)_S
\]
On the other hand, by Wedderburn's Structure Theorem, the regular representation of $A$ has the form
$A_\reg \cong \bigoplus_{T \in \Irr A} T^{\oplus d(T)}$,
whence $\chi_\reg = \sum_{T \in \Irr A} d(T)  \chi_{T}$. 
Since  $\cprid{S} \rhk \chi_{T} = \chi_T \lhk \cprid{S} = \delta_{S,T}\chi_S$, we obtain
\begin{equation}
\label{E:cpridSymm}
\cprid{S} \rhk \chi_\reg  = \chi_\reg \lhk \cprid{S} = d(S) \chi_S
\end{equation}
Therefore, $\gen{\chi_\reg, \cprid{S} a} = d(S) \gen{\chi_S,a}$, proving \eqref{E:cpridSymm.0}.
Finally, \eqref{E:cpridSymm.0} also shows that $\Hig{\lambda}(1)\,\cprid{S} = 0$ if and only if
$d(S) \chi_S = 0$, which implies the last assertion in the proposition. 
\end{proof}

\subsection{The Casimir Square}
\label{SS:CasimirSquare}

Continuing to assume that $A$ be a finite-dimensional semisimple $\k$-algebra, we now describe the 
Casimir square $\Cas{\lambda}^2 \in \cen A \otimes \cen A$ (Lemma~\ref{L:Casimir}) 
in terms of the following isomorphism coming from the
Wedderburn isomorphism \eqref{E:Wedd}:
\begin{equation}
\label{E:A2Iso}
\begin{tikzpicture}[baseline=(current  bounding  box.center),  >=latex, scale=.7,
bij/.style={above,sloped,inner sep=0.5pt}]
\matrix (m) [matrix of math nodes, row sep=.6em,
column sep=2em, text height=1.5ex, text depth=0.25ex]
{A \otimes A & {\displaystyle \prod_{S,T \in \Irr A} \End_{D(S)}(S) \otimes \End_{D(T)}(T)} \\[3pt] 
\upin & \upin \\ a \otimes b & \big( a_S \otimes b_T \big) \\};
\draw[->] (m-1-1) edge node[bij] {$\sim$}  (m-1-2);
\draw[|->] (m-3-1) edge (m-3-2);
\end{tikzpicture} 
\end{equation}
We will write $t_{S,T} \in \End_{D(S)}(S) \otimes \End_{D(T)}(T)$ for the $(S,T)$-component of the
image of $t \in A\otimes A$ under the above isomorphism; so $(a \otimes b)_{S,T}= a_S \otimes b_T$. 
%Note that $(\Cas{\lambda}^2)_{S,T} \in \Mat_{\Irr A}(\k)$.
Recall that $S \in \Irr A$ is absolutely irreducible if and only if $D(S) = \k$.

\begin{thm}
\label{T:Casimir}
Let $A$ be a finite-dimensional semisimple $\k$-algebra with Frobenius trace form $\lambda$\,. Then
$(\Cas{\lambda})_{S,T} = 0$ for $S \neq	 T \in \Irr A$. If $S$ is absolutely irreducible, then
$(\dim_\k S)^2\,(\Cas{\lambda}^2)_{S,S} = \Hig{\lambda}(1)_S^2$.
\end{thm}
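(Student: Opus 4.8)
The plan is to compute both factors $(\Cas{\lambda})_{S,S}$ and $(\Cas{\lambda}^2)_{S,S}$ separately in the Wedderburn component $\End_{D(S)}(S)$, using Proposition~\ref{P:cprid} as the main tool. First I would dispose of the off-diagonal vanishing. Since $\Cas\lambda \in A\otimes A$ and $\Cas\lambda \cdot (x \otimes 1) = \Cas\lambda$ whenever $x=1$, the key is that Lemma~\ref{L:Casimir} gives $(e \otimes 1)\Cas\lambda = \Cas\lambda(1 \otimes e)$ for $e = \cprid{S}$; applying the Wedderburn isomorphism \eqref{E:A2Iso} componentwise shows that the $(S,T)$-component of $\Cas\lambda$ is killed on the left by $\Id_S$ and on the right (via the transported identity) by $\Id_T$ in an incompatible way unless $S=T$. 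More directly: multiply $\Cas\lambda$ by $\cprid{S}\otimes\cprid{T}$ and use $(\cprid{S}\otimes 1)\Cas\lambda = \Cas\lambda(1 \otimes \cprid{S})$ from Lemma~\ref{L:Casimir} to conclude $(\cprid{S}\otimes\cprid{T})\Cas\lambda = \delta_{S,T}(\cprid{S}\otimes\cprid{S})\Cas\lambda$, which is exactly the assertion $(\Cas\lambda)_{S,T} = 0$ for $S \neq T$ after noting $(\Cas\lambda)_{S,T} = ((\cprid{S}\otimes\cprid{T})\Cas\lambda)_{S,T}$.

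Next, assuming $S$ absolutely irreducible so $D(S) = \k$ and $d(S) = \dim_\k S$, I would pin down $(\Cas\lambda)_{S,S}$ as an element of $\End_\k(S)\otimes\End_\k(S)$. Proposition~\ref{P:cprid} gives $\Hig\lambda(1)\,\cprid{S} = d(S)(\chi_S \otimes \Id_A)(\Cas\lambda)$; since $\Hig\lambda(1) \in \cen A$ acts on $S$ as the scalar $\Hig\lambda(1)_S \in \k$ (writing $\Hig\lambda(1)_S$ for that scalar via $\End_{D(S)}(S) = \k\,\Id_S$, identifying $\Hig\lambda(1)_S\Id_S$ with the scalar), and $\cprid{S}_S = \Id_S$, the $S$-component of the left side is $\Hig\lambda(1)_S\,\Id_S$. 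On the right, $(\chi_S \otimes \Id_A)(\Cas\lambda)$ has $S$-component $(\chi_S\otimes\Id_S)((\Cas\lambda)_{S,S})$ because the off-diagonal components vanish and $\chi_S$ kills all components $\End_\k(T)$ with $T\neq S$. So we obtain the identity $(\chi_S\otimes\Id_S)\big((\Cas\lambda)_{S,S}\big) = \tfrac{1}{d(S)}\Hig\lambda(1)_S\,\Id_S$ in $\End_\k(S)$, and by the $\tau$-symmetry \eqref{E:CasimirSymm} also $(\Id_S\otimes\chi_S)\big((\Cas\lambda)_{S,S}\big) = \tfrac{1}{d(S)}\Hig\lambda(1)_S\,\Id_S$.

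The heart of the argument is then a linear-algebra computation inside $\End_\k(S)\otimes\End_\k(S) \cong \End_\k(S\otimes S)$. Write $w = (\Cas\lambda)_{S,S}$. From the $A\otimes A$-level identity $(a\otimes 1)\Cas\lambda = \Cas\lambda(1\otimes a)$ (Lemma~\ref{L:Casimir}, pre-$\tau$ form) restricted to the $(S,S)$-component, $w$ satisfies $(a_S\otimes\Id_S)w = w(\Id_S\otimes a_S)$ for all $a$, i.e. $w$ intertwines the two commuting $\End_\k(S)$-actions; combined with $\tau$-invariance this forces $w$ to be a scalar multiple of the "flip tensor" $\omega = \sum_{k,l} E_{kl}\otimes E_{lk}$ (in matrix units $E_{kl}$ for a basis of $S$), since $\omega$ spans the space of such intertwiners — this is the standard fact that the only element of $\Mat_n\otimes\Mat_n$ commuting with all $a\otimes 1$ and equal to its own flip, up to scalar, is $\omega$ (equivalently, $\omega$ generates the commutant of the diagonal $\mathrm{GL}$-action). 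Say $w = \beta\,\omega$. Applying $\chi_S\otimes\Id_S$ to $\omega$ gives $\sum_{k,l}\trace(E_{kl})E_{lk} = \sum_k E_{kk} = \Id_S$, so the identity from the previous paragraph reads $\beta\,\Id_S = \tfrac{1}{d(S)}\Hig\lambda(1)_S\,\Id_S$, giving $\beta = \Hig\lambda(1)_S/d(S)$. Finally $w^2 = \beta^2\omega^2 = \beta^2\,\Id_{S\otimes S}$ since $\omega^2 = \Id$, and passing back through \eqref{E:A2Iso}, $(\Cas\lambda^2)_{S,S} = (\Cas\lambda)_{S,S}^2 = w^2 = \beta^2\,\Id_S\otimes\Id_S$, so $(\dim_\k S)^2\,(\Cas\lambda^2)_{S,S} = d(S)^2\beta^2 = \Hig\lambda(1)_S^2$, as claimed.

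The main obstacle I anticipate is the middle step: carefully justifying that $w = (\Cas\lambda)_{S,S}$ is forced to be a scalar multiple of the flip element $\omega$, i.e. identifying the space of $\tau$-symmetric elements of $\End_\k(S)^{\otimes 2}$ satisfying the intertwining relation $(a_S\otimes 1)w = w(1\otimes a_S)$. One must be a little careful that the relevant relation coming from Lemma~\ref{L:Casimir} is the "twisted" one $(a\otimes 1)\Cas\lambda = \Cas\lambda(1\otimes a)$ rather than genuine centrality, and that after transporting to $\End_\k(S)\otimes\End_\k(S)$ this pins $w$ down uniquely up to scalar. An alternative that sidesteps the structural description of $w$: apply $\Id_S\otimes\chi_S$ directly to $w^2$. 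Since $w = \sum_i (x_i)_S\otimes(y_i)_S$, one has $(\Id\otimes\chi_S)(w^2) = \sum_{i,j}(x_i)_S(x_j)_S\otimes\trace((y_i)_S(y_j)_S)$, and using the twisted relation to move one factor across, this telescopes via the two evaluations $(\chi_S\otimes\Id_S)(w) = (\Id_S\otimes\chi_S)(w) = \beta\,\Id_S$ to give $(\Id_S\otimes\chi_S)(w^2) = \beta^2\dim_\k S\cdot\Id_S$ — wait, that computes $d(S)(\Cas\lambda^2)_{S,S}$ contracted, not $(\Cas\lambda^2)_{S,S}$ itself, so one still needs the flip identification or an extra contraction; hence I expect the cleanest write-up really does go through $\omega$ as above.
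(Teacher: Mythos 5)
Your argument is correct, and for the main (diagonal) assertion it takes a genuinely different route from the paper's. The off-diagonal vanishing is the same as in the paper (hit $\Cas{\lambda}$ with $\cprid{S}\otimes\cprid{T}$ and use Lemma~\ref{L:Casimir}); your opening sentence about $\Cas{\lambda}\cdot(x\otimes 1)$ with $x=1$ is vacuous, but the ``more directly'' version is exactly the paper's computation. For the diagonal part, the paper never identifies $(\Cas{\lambda})_{S,S}$ itself: it writes $\Cas{\lambda}^2=(\Hig{\lambda}\otimes\Id)(\Cas{\lambda})$, proves the scalar identity $d(S)\,(\rho_S\circ\Hig{\lambda})(a)=\chi_S(a)\,\Hig{\lambda}(1)_S$, and then evaluates using Proposition~\ref{P:cprid}. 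You instead pin down the component itself, showing $(\Cas{\lambda})_{S,S}=\frac{\Hig{\lambda}(1)_S}{\dim_\k S}\,\omega$ with $\omega=\sum_{k,l}E_{kl}\otimes E_{lk}$ the flip operator on $S\otimes S$, by combining the intertwining relation $(a_S\otimes 1)w=w(1\otimes a_S)$ (which, by surjectivity of $\rho_S$ for absolutely irreducible $S$, holds for all of $\End_\k(S)$) with $\tau$-invariance, and normalizing via Proposition~\ref{P:cprid}; the theorem then drops out from $\omega^2=\Id$. This buys a slightly stronger and more structural statement --- an explicit formula for $(\Cas{\lambda})_{S,S}$, not just its square --- at the cost of the commutant computation; the paper's route is shorter and needs nothing beyond the trace identities.

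One small correction to your justification of the key step: $\omega$ does \emph{not} commute with the elements $a\otimes 1$, so ``the only element of $\Mat_n\otimes\Mat_n$ commuting with all $a\otimes 1$ and equal to its own flip is $\omega$'' is not the right statement (that space is $\k(1\otimes 1)$). The correct argument is the one your main text gestures at: the solution space of $(a\otimes 1)w=w(1\otimes a)$ for all $a$ is $(1\otimes\Mat_n)\,\omega$ (multiply by $\omega$ on the right to reduce to the commutant of $\Mat_n\otimes 1$, which is $1\otimes\Mat_n$), and imposing $\tau(w)=w$ forces $(Y\otimes 1)\omega=(1\otimes Y)\omega$, hence $Y$ scalar. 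With that repair the proof is complete.
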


\begin{proof}
For $S \neq T$, we have
\[
(\Cas{\lambda})_{S,T} % &= (\Id_S \otimes \Id_T) (\Cas{\lambda})_{S,T} 
= \big( (\cprid{S} \otimes \cprid{T})\Cas{\lambda} \big)_{S,T} 
\underset{\text{Lemma~\ref{L:Casimir}}}{=} \big( \Cas{\lambda} (\cprid{T} \otimes \cprid{S}) \big)_{S,T} 
=  (\Cas{\lambda})_{S,T} (0_S \otimes 0_T) 
= 0
\]
It remains to consider $(\Cas{\lambda}^2)_{S,S}$. First, 
\begin{equation}
\label{E:CasSquare}
\Cas{\lambda}^2 = {\textstyle \sum_i}\, (x_i \otimes y_i)\Cas{\lambda}
\underset{\text{Lemma~\ref{L:Casimir}}}{=} 
{\textstyle \sum_i}\, (x_i \otimes 1)\Cas{\lambda}(y_i \otimes 1) = (\Hig{\lambda} \otimes \Id)(\Cas{\lambda})
\end{equation}
Next, for $c \in \cen A$, the operator $c_S \in D(S)$ is a scalar, since $S$ absolutely irreducible,
and $\chi_S(c) = d(S) c_S$ with $d(S) = \dim_\k S$. Therefore, writing $\rho_S(a) = a_S$ for $a \in A$, we calculate
\begin{equation}
\label{E:*}
\begin{aligned}
d(S) (\rho_S \circ \Hig{\lambda}) (a) &=
(\chi_S \circ \Hig{\lambda})(a)  = \chi_S({\textstyle \sum_i} x_i a y_i) = \chi_S({\textstyle \sum_i} a y_ix_i) \\
&= \chi_S(a \,\Hig{\lambda}(1)) = \chi_S(a)\, \Hig{\lambda}(1)_S
\end{aligned}
\end{equation}
and further 
\[
\begin{aligned}
d(S)^2\,(\Cas{\lambda}^2)_{S,S} 
&\hspace{1.27em} \underset{\eqref{E:CasSquare}}{=} \hspace{1.27em} 
d(S)^2\,(\rho_S \otimes \rho_{S})\big((\Hig{\lambda} \otimes \Id)(\Cas{\lambda})\big)\\
&\hspace{1.45em} = \hspace{1.45em} d(S)^2\,\big((\rho_S \circ \Hig{\lambda}) \otimes \rho_S\big)(\Cas{\lambda})\\
&\hspace{1.27em} \underset{\eqref{E:*}}{=} \hspace{1.27em} 
d(S)\,(\chi_S  \otimes \rho_S )(\Cas{\lambda})\, \Hig{\lambda}(1)_S\\
%&\hspace{1.45em} = \hspace{1.45em}
%(\dim_\k S)\,(\chi_S  \otimes \rho_S )(\Cas{\lambda})\, \Hig{\lambda}(1)_S\\
&\hspace{1.45em} = \hspace{1.45em} 
(\Id_\k \otimes \rho_S )\big( d(S)\,(\chi_S  \otimes \Id)(\Cas{\lambda})\big)\, \Hig{\lambda}(1)_S\\
&\underset{\text{Proposition~\ref{P:cprid}}}{=} 
\rho_S\big( \cprid{S}\Hig{\lambda}(1) \big)\, \Hig{\lambda}(1)_S = \Hig{\lambda}(1)_S^2 \\
\end{aligned}
\]
which completes the proof of the theorem.
\end{proof}

\subsection{Integrality and Divisibility}
\label{SS:Divisibility}

We recall some standard facts about integrality.
Let $R$ be a ring and let $S$ be a subring of the center 
$\cen R$. An element $r \in R$ is said to be \emph{integral}
over $S$ if $r$ satisfies some monic polynomial over $S$. 
The following basic facts will be used repeatedly below:
\begin{itemize}
\item
An element $r \in R$ is integral over $S$ if and only if $r \in R'$ for some subring $R'\subseteq R$
such that $R'$ contains $S$ and is finitely generated as an $S$-module.
\item
If $R$ is commutative, then the elements of $R$ that are integral over $S$ 
form a subring of $R$ containing $S$, called the
\emph{integral cosure} of $S$ in $R$.
\item
The integral closure of $\ZZ$ in $\QQ$ is $\ZZ$: an 
element of $\QQ$ that is integral over $\ZZ$ must belong to $\ZZ$.
\end{itemize}
The last fact above reduces the problem of
showing that a given nonzero integer $s$ divides another integer $t$ to proving that the 
fraction $\frac{t}{s}$ is merely integral over $\ZZ$. 

A semisimple $\k$-algebra $A$ is said to be \emph{split} if $D(S) = \k$ holds for all $S \in \Irr A$.

\begin{cor}
\label{C:Divisibility}
Let $A$ be a 
split semisimple $\k$-algebra with Frobenius trace form $\lambda$. Assume
that $\ch \k = 0$ and that $\Hig{\lambda}(1) \in \ZZ$\,.
Then the following are equivalent:
\begin{enumerate}
\renewcommand{\labelenumi}{\normalfont (\roman{enumi})}
\item
The degree of every irreducible representation of $A$ divides $\Hig{\lambda}(1)$;
\item
the Casimir element $\Cas{\lambda}$ is integral over $\ZZ$.
\end{enumerate}
\end{cor}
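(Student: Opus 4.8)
The plan is to use Theorem~\ref{T:Casimir} together with Proposition~\ref{P:cprid} to translate both conditions into statements about the rational numbers $\Hig{\lambda}(1)_S / \dim_\k S$, and then to invoke the fact that the integral closure of $\ZZ$ in $\QQ$ is $\ZZ$. Since $A$ is split semisimple over a field of characteristic $0$, we have $D(S) = \k$ and $d(S) = \dim_\k S$ for each $S \in \Irr A$; moreover the hypothesis $\Hig{\lambda}(1) \in \ZZ$ should be read as: the scalar $\Hig{\lambda}(1)_S$ is the same rational integer, call it $n$, for every $S$ (this is what it means for the center-valued element $\Hig{\lambda}(1)$ to lie in $\ZZ \subseteq \k$, identifying $\ZZ \cdot 1_A$ inside $\cen A = \prod_S \k$). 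Thus (i) says exactly that $\dim_\k S$ divides $n$ in $\ZZ$, i.e. that $n / \dim_\k S \in \ZZ$, for all $S$.

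First I would analyze condition (ii). By Lemma~\ref{L:Casimir}, $\Cas{\lambda}^2 \in \cen A \otimes \cen A = \prod_{S,T} \k$, and by Theorem~\ref{T:Casimir} its $(S,T)$-component vanishes for $S \neq T$ while $(\Cas{\lambda}^2)_{S,S} = \bigl(\Hig{\lambda}(1)_S / \dim_\k S\bigr)^2 = (n/\dim_\k S)^2$. Now $\Cas{\lambda}$ is integral over $\ZZ$ if and only if $\Cas{\lambda}^2$ is: one direction is clear since $\ZZ[\Cas{\lambda}^2] \subseteq \ZZ[\Cas{\lambda}]$, and conversely $\Cas{\lambda}$ satisfies the monic polynomial obtained from a monic relation for $\Cas{\lambda}^2$ by substituting $X^2$. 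Next, $\Cas{\lambda}^2$ lives in the commutative ring $\cen A \otimes \cen A \cong \k^{\,|\Irr A|^2}$; an element of a finite product of copies of $\k$ is integral over $\ZZ$ if and only if each of its coordinates is integral over $\ZZ$ (the minimal monic relations multiply together, and conversely a coordinate projection of a monic relation is monic). Hence (ii) is equivalent to: each $(n/\dim_\k S)^2$ is integral over $\ZZ$. Since integrality over $\ZZ$ is closed under taking square roots inside $\QQ$ — if $(n/\dim_\k S)^2$ is integral over $\ZZ$ then so is $n/\dim_\k S$, being a root of a monic polynomial — and since a rational number integral over $\ZZ$ lies in $\ZZ$, condition (ii) is equivalent to $n/\dim_\k S \in \ZZ$ for all $S$. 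This is precisely condition (i), completing the equivalence.

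The main obstacle, and the point requiring the most care, is the bookkeeping around the hypothesis $\Hig{\lambda}(1) \in \ZZ$: one must be precise about how $\ZZ$ sits inside $\cen A$ and why "$\Hig{\lambda}(1) \in \ZZ$" forces all the scalars $\Hig{\lambda}(1)_S$ to coincide with a single integer $n$, so that the rational numbers $n/\dim_\k S$ are well-defined and Theorem~\ref{T:Casimir} applies uniformly. A secondary point is to state cleanly the two elementary facts about integrality over $\ZZ$ that are used — stability under adjoining square roots (both of the element and of its coordinates in a product ring) — but these follow immediately from the substitution trick $X \mapsto X^2$ and from the bulleted facts already recalled in \S\ref{SS:Divisibility}, so no genuine difficulty arises there. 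I would also remark that the case $n = 0$ (i.e. $\Hig{\lambda}(1) = 0$) is degenerate but harmless: then (i) holds vacuously in the sense that $\dim_\k S$ divides $0$, and (ii) holds since $\Cas{\lambda}^2 = 0$.
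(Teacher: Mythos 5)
Your proof is correct and follows essentially the same route as the paper's: both reduce integrality of $\Cas{\lambda}$ to integrality of the diagonal scalars $(\Cas{\lambda}^2)_{S,S} = \bigl(\Hig{\lambda}(1)/\dim_\k S\bigr)^2$ via Theorem~\ref{T:Casimir} and then invoke the fact that $\ZZ$ is integrally closed in $\QQ$. The differences are only cosmetic: you handle the componentwise integrality with products of monic polynomials and the substitution $X \mapsto X^2$, where the paper argues via finite generation of $\ZZ[\Cas{\lambda}^2]$ as a $\ZZ$-module.
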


\begin{proof}
Theorem~\ref{T:Casimir} gives the formula
%\begin{equation}
%\label{E:CasimirSquare}
$(\Cas{\lambda}^2)_{S,S} =  \big( \frac{\Hig{\lambda}(1)}{\dim_\k S} \big)^2$.
%\end{equation}
If (i) holds, then the isomorphism \eqref{E:A2Iso} maps
$\ZZ[c_{\lambda}^2]$ into $\prod_{S \in \Irr H} \ZZ$, because $(c_{\lambda})_{S,T} = 0$ 
for $S \neq T$ by
Theorem~\ref{T:Casimir}. Thus, $\ZZ[c_{\lambda}]$ is a finitely generated $\ZZ$-module and (ii) follows.
Conversely, (ii) implies that
$c_{\lambda}^2$ also satisfies a monic polynomial over $\ZZ$ and all 
$(c_{\lambda}^2)_{S,S}$ satisfy the same
polynomial. Therefore, the fractions $\frac{\Hig{\lambda}(1)}{\dim_\k S}$ 
must be integers, proving (i). 
\end{proof}

Next, for a given homomorphism $(A,\lambda) \to (B,\mu)$ of symmetric algebras, we may 
consider the induced module $\Ind_A^B S = B \otimes_A S$ for each $S \in \Irr A$

\begin{cor}
\label{C:Divisibility.2}
Let $A$ be a 
split semisimple algebra over a field $\k$ of characteristic $0$ and let $\lambda$
be a Frobenius trace form for $A$.
Furthermore, let $(B,\mu)$ be a symmetric $\k$-algebra such that $\Hig{\mu}(1) \in \k$ and let
$(A,\lambda) \to (B,\mu)$ be a homomorphism of symmetric algebras.
If the Casimir element $\Cas{\lambda}$ is integral over $\ZZ$, then so is
the scalar $\frac{\Hig{\mu}(1)}{\dim_\k \Ind_A^B S}$ for each $S \in \Irr A$.
\end{cor}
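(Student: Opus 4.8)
The plan is to relate the Casimir trace of $B$ evaluated at $1$ to the Casimir element of $A$, and then invoke Corollary~\ref{C:Divisibility} applied to the image of $A$ in $B$. Let $f\colon (A,\lambda) \to (B,\mu)$ be the given homomorphism of symmetric algebras, so $\lambda = \mu \circ f$. Write $\Cas\lambda = \sum_i x_i \otimes y_i$ for the Casimir element of $(A,\lambda)$. First I would use the defining property of $\Cas\lambda$ together with the relation $\lambda = \mu \circ f$ to compute $\Hig\mu(1)$, or rather a suitable component of it, in terms of the $f(x_i)$ and $f(y_i)$. The key computation is a trace formula: for $S \in \Irr A$, the representation of $B$ on $\Ind_A^B S = B\otimes_A S$ gives rise to a character, and by Lemma~\ref{L:FrobTraces} (or rather \eqref{E:Traces.2} applied to $B$ on an appropriate module), the value $\Hig\mu(1)$ acts on $\Ind_A^B S$ by a scalar that can be read off from $\sum_i f(x_i) \otimes f(y_i)$.

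More precisely, the cleanest route is: the image $f(A) \subseteq B$ together with the restriction of $\mu$ need not be a symmetric algebra directly, so instead I would work with $A$ itself. By the trace formula \eqref{E:Traces.2} applied to the $A$-module $\Ind_A^B S$ — noting that $\Ind_A^B S$ is a finite-dimensional $A$-module via $f$ — one has, for $a \in A$, the trace of the action of $a$ on $\Ind_A^B S$ expressible through $\lambda$ and $\Hig\lambda$. Specializing $a$ appropriately and using Proposition~\ref{P:cprid} for $A$ (which applies since $A$ is split semisimple and $\ch\k = 0$, so $\Hig\lambda(1)$ is a nonzero scalar on each component — here I should be careful: the hypotheses of Corollary~\ref{C:Divisibility} for $A$ itself are available provided $\Hig\lambda(1) \in \ZZ$, but that is not assumed; rather $\Cas\lambda$ being integral over $\ZZ$ is the hypothesis), I would extract that $\frac{\Hig\mu(1)}{\dim_\k \Ind_A^B S}$ equals a specific component, indexed by $S$, of an element of $\cen A \otimes \cen A$ built polynomially from $\Cas\lambda$.

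The heart of the matter: I expect the scalar $\frac{\Hig\mu(1)}{\dim_\k \Ind_A^B S}$ to coincide with $(\chi_S \otimes \chi_S^{\Ind})(\text{something})$ or, more likely, to appear as a component of $\Cas\lambda$ pushed through $f$, i.e. as a component of $\big(\Hig\mu \circ f\big)$-type data. Concretely, computing $\mu\big(\Hig\mu(1) \cdot b\big)$ for $b$ in the image and comparing to $\gen{\lambda, \Hig\lambda(1)\cprid{S} \cdot a}$ via Proposition~\ref{P:cprid} should yield that the desired fraction is, up to the nonzero scalar $\Hig\lambda(1)_S/\dim_\k S$ which is itself controlled by $(\Cas\lambda^2)_{S,S}$ being integral (Theorem~\ref{T:Casimir} and the argument in Corollary~\ref{C:Divisibility}), equal to a component of $(\chi_S \otimes \Id_A)(\Cas\lambda)$-type expression evaluated in $B$. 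Since $\Cas\lambda$ is integral over $\ZZ$, all its components under \eqref{E:A2Iso} are integral over $\ZZ$, and hence so is any polynomial expression in them with $\ZZ$-coefficients; the ring of such elements being finitely generated as a $\ZZ$-module closes the argument.

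\textbf{Main obstacle.} The genuine difficulty is bookkeeping the relationship between $\Hig\mu(1) \in \k$ (a scalar for $B$, which need not be semisimple or split) and the Casimir data of $A$, passing correctly through the induction functor $\Ind_A^B$ and the algebra map $f$ which is neither injective nor surjective in general. In particular I must verify that $\dim_\k \Ind_A^B S$ enters exactly as claimed — this should follow from $\Ind_A^B S = B \otimes_A S$ and $\dim_\k \Ind_A^B S = (\dim_\k S / \dim_\k D(S)) \cdot (\dim_\k B \otimes_A \, ?)$, but since $A$ is split, $D(S) = \k$ and the dimension count simplifies. The other delicate point is that $\Hig\lambda(1)$ is not assumed integral, so I cannot directly apply Corollary~\ref{C:Divisibility} to $A$; instead I must show the fraction $\frac{\Hig\mu(1)}{\dim_\k \Ind_A^B S}$ is integral over $\ZZ$ \emph{directly} from integrality of $\Cas\lambda$, by exhibiting it inside the finitely-generated $\ZZ$-subalgebra of $\cen A \otimes \cen A$ (or a scalar quotient thereof) generated by the components of $\Cas\lambda$.
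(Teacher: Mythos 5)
Your overall strategy points in the right direction: you correctly identify that the target is to express $\frac{\Hig{\mu}(1)}{\dim_\k \Ind_A^B S}$ in terms of $\frac{\Hig{\lambda}(1)_S}{\dim_\k S}$, whose square is $(\Cas{\lambda}^2)_{S,S}$ by Theorem~\ref{T:Casimir} and hence is integral over $\ZZ$ whenever $\Cas{\lambda}$ is. You are also right that Corollary~\ref{C:Divisibility} cannot be applied to $A$ directly since $\Hig{\lambda}(1)$ need not lie in $\ZZ$. But the central computation — the exact identity $\frac{\Hig{\mu}(1)}{\dim_\k \Ind_A^B S} = \frac{\Hig{\lambda}(1)_S}{\dim_\k S}$ (it is an equality, not an equality ``up to'' that scalar) — is never established, and the route you sketch for it does not work: you propose applying the trace formula \eqref{E:Traces.2} to $\Ind_A^B S$ viewed as an $A$-module, but that formula computes traces of multiplication operators on the \emph{regular} module of a symmetric algebra, not on an arbitrary module, and $\Ind_A^B S$ carries no natural symmetric structure. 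You flag this as the ``main obstacle'' and leave it unresolved, so the proof is incomplete at its crux.

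The missing idea is an idempotent trick carried out entirely inside $(B,\mu)$. Set $e:=\cprid{S}$, so $Ae \cong S^{\oplus \dim_\k S}$ (using that $A$ is split) and hence $\Ind_A^B S^{\oplus \dim_\k S}\cong B\phi(e)$, where $\phi$ is the given homomorphism. Since $\phi(e)$ is an idempotent of $B$, one has $\dim_\k B\phi(e) = \trace({}_B\phi(e))$, and \emph{this} trace is computable by \eqref{E:Traces.2} for $(B,\mu)$: it equals $\gen{\mu,\phi(e)\,\Hig{\mu}(1)} = \gen{\mu,\phi(e)}\,\Hig{\mu}(1)$, since $\Hig{\mu}(1)$ is assumed to be a scalar. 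The compatibility $\lambda = \mu\circ\phi$ converts $\gen{\mu,\phi(e)}$ into $\gen{\lambda,e}$, and \eqref{E:cpridSymm.0} with $a=1$ gives $\gen{\lambda,e} = \frac{(\dim_\k S)^2}{\Hig{\lambda}(1)_S}$. Combining, $(\dim_\k S)\dim_\k\Ind_A^B S = \frac{(\dim_\k S)^2}{\Hig{\lambda}(1)_S}\Hig{\mu}(1)$, which is exactly the desired identity. Note that this is where the hypothesis $\Hig{\mu}(1)\in\k$ and the homomorphism condition $\lambda=\mu\circ\phi$ are used; your sketch never isolates these two uses, which is a symptom of the gap.
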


\begin{proof}
It suffices to show that 
\begin{equation*}
%\label{E:Divisibility.2}
\sfrac{\Hig{\mu}(1)}{\dim_\k \Ind_A^B S} = \sfrac{\Hig{\lambda}(1)_S}{\dim_\k S}
\end{equation*}
Indeed, by Theorem~\ref{T:Casimir}, the square of the fraction on the right equals 
$(\Cas{\lambda}^2)_{S,S}$\,, which is integral over $\ZZ$ if $\Cas{\lambda}$ is.
To check the above equality, let us put $e:= \cprid{S}$ for brevity. Then
$S^{\oplus \dim_\k S} \cong A e$ and so $\Ind_A^B S^{\oplus \dim_\k S} \cong B\phi(e)$, where 
$\phi$ denotes the given homomorphism $(A,\lambda) \to (B,\mu)$.
Since $\phi(e) \in B$ is an idempotent, $\dim_\k B\phi(e) = \trace({}_B\phi(e))$. Therefore,
\[
\begin{aligned}
\dim_\k \Ind_A^B S^{\oplus \dim_\k S} &= \trace({}_B\phi(e))
\underset{\eqref{E:Traces.2}}{=} \gen{\mu, \phi(e) \,\Hig{\mu}(1)} 
= \gen{\mu, \phi(e)} \,\Hig{\mu}(1) \\ 
&= \gen{\lambda, e} \,\Hig{\mu}(1) 
\underset{\eqref{E:cpridSymm.0}}{=}  \sfrac{(\dim_\k S)^2}{\Hig{\lambda}(1)_S} \Hig{\mu}(1)
\end{aligned}
\]
The desired equality is immediate from this.
\end{proof}

\subsection{A First Application: Frobenius' Divisibility Theorem for Finite Group Algebras}
\label{SS:FrobFinitekG}

Returning to the setting of \S\ref{SSS:kG}, consider
the group algebra $\k G$ of any finite group $G$ and assume that $\k$ is a splitting field for $\k G$ with $\ch \k = 0$;
so $\k G$ is split semisimple. 
The Frobenius form $\lambda$
of \S\ref{SSS:kG} satisfies $\gen{\lambda, gh^{-1}} = \d_{g,h}$ for $g,h\in G$. Hence, the Casimir element is 
\[
\Cas\lambda = \sum_{g\in G} g \otimes g^{-1}
\]
Since $\Cas\lambda \in \ZZ G \otimes_\ZZ \ZZ G$, a subring of $\k G \otimes \k G$ that is
finitely generated over $\ZZ$, condition (ii) in Corollary~\ref{C:Divisibility} is satisfied.
Moreover, $\Hig\lambda(1) = |G| \in \ZZ$ as also required in Corollary~\ref{C:Divisibility}. Thus, the corollary
yields that the degrees of all irreducible representations of $\k G$ divide $\Hig{\lambda}(1) = |G|$, 
as stated in Frobenius' classical theorem.

%%%%%%%%%%%%%%%%%%%%%%%%%%%%%%%%%%%%%%%%%%%%%%%%%%

\section{Hopf Algebras} 
\label{S:Hopf}

\subsection{Preliminaries: Semisimplicity, Integrals, and Frobenius Forms}

We begin with a few reminders on semisimple Hopf algebras $H$ over a field $\k$
of characteristic $0$; the reader is referred to \cite{sM93},  \cite{dR12} and \cite{hjS95} for details. First, 
semisimplicity of $H$ amounts to $H$ being finite-dimensional and
involutory, that is, the antipode of $H$ satisfies $\ant^2 = \Id_H$. Both properties pass to $H^*$; so
$H^*$ is semisimple as well. 
By Maschke's Theorem for Hopf algebras, $H$ is unimodular and $\gen{\e,\Lambda} \neq 0$ holds for any
nonzero integral $\Lambda \in \Hint_H$, where $\e$ is the counit of $H$. 
Furthermore, each such $\Lambda$ serves as Frobenius form
for $H^*$. In this section, we will fix the unique $\Lambda \in \Hint_H$ such that
\begin{equation}
\label{E:Lambda}
\gen{\e,\Lambda} = \dim_\k H
\end{equation}
We also fix the following normalized version of the regular character $\chi_\reg$ of $H$; see \S\ref{SS:Trace}:
\begin{equation}
\label{E:lambda}
\lambda:= (\dim_\k H)^{-1} \chi_\reg 
\end{equation}
Then $\lambda$
is a nonzero integral of $H^*$ satisfying $\gen{\lambda,\Lambda} = \gen{\lambda,1} = 1$.
Taking $\lambda$ as Frobenius form for $H$, the associated Casimir element is 
\begin{equation}
\label{E:CasCh0}
\begin{aligned}
\Cas{\lambda} &= \ant(\Lambda_{(1)}) \otimes \Lambda_{(2)} = \Lambda_{(2)} \otimes \ant(\Lambda_{(1)}) \\
&= \ant(\Lambda_{(2)}) \otimes \Lambda_{(1)} = \Lambda_{(1)} \otimes \ant(\Lambda_{(2)})
\end{aligned}
\end{equation}
Thus, the Casimir trace $\Hig{\lambda} \colon H \to \cen H$ is 
given by  $\Hig{\lambda}(h) = \ant(\Lambda_{(1)}) h \Lambda_{(2)}$ for $h \in H$. Therefore,
\begin{equation}
\label{E:HigValue}
\Hig{\lambda}(1) = \gen{\e, \Lambda} = \dim_\k H
\end{equation}
Reversing the roles of $H$ and $H^*$ and taking $\Lambda$ as a Frobenius form for $H^*$,
the value of the Casimir trace $\Hig{\Lambda}$ at $\e = 1_{H^*}$ is
\begin{equation}
\label{E:HigValue.2}
\Hig{\Lambda}(\e) = \gen{\lambda, 1} = 1
\end{equation}

\begin{example}
For a finite group algebra $H = \k G$, the integral $\Lambda$ in \eqref{E:Lambda} 
is $\Lambda =  \sum_{g \in G} g$. The normalized regular
character $\lambda$ in \eqref{E:lambda} is given by $\gen{\lambda,g} = \d_{g,1}$ 
for $g\in G$; so $\lambda$ is identical to the Frobenius form of \S\ref{SSS:kG}. 
The Casimir element $\Cas\lambda$ in \eqref{E:CasCh0} is
$\sum_{g\in G} g \otimes g^{-1}$ as in \S\ref{SS:FrobFinitekG}. So $\Hig\lambda(h) = \sum_{g \in G} g h g^{-1}$
for $h \in \k G$.
\end{example}

\subsection{Frobenius Divisibility for Hopf Algebras}
\label{SS:FrobDivHopf}

The following special case of Corollary~\ref{C:Divisibility} is due to
Cuadra and Meir \cite[Theorem 3.4]{jCeMxx}. Note that \eqref{E:CasCh0} gives a formula for the
Casimir element to be tested for integrality. For $H = \k G$, the theorem gives 
Frobenius' original result (\S\ref{SS:FrobFinitekG}). 
%The proof will be identical to the one given in \S\ref{SS:FrobFinitekG} for finite group algebras.

\begin{thm}[Cuadra and Meir]
\label{T:FrobHopf}
Then the following are equivalent for 
a split semisimple Hopf algebra $H$ over a field $\k$ of characteristic $0$. 
\begin{enumerate}
\renewcommand{\labelenumi}{\normalfont (\roman{enumi})}
\item
Frobenius divisibility for $H$: the degrees of all irreducible representations of $H$ divide $\dim_\k H$;
\item
the Casimir element \eqref{E:CasCh0} is integral over $\ZZ$.
\end{enumerate}
\end{thm}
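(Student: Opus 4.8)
The plan is to deduce Theorem~\ref{T:FrobHopf} directly from Corollary~\ref{C:Divisibility} applied to the symmetric algebra $(H,\lambda)$, where $\lambda$ is the normalized regular character \eqref{E:lambda}. To invoke the corollary we must check its two standing hypotheses: that $\ch\k = 0$, which is assumed, and that $\Hig{\lambda}(1) \in \ZZ$. The latter is exactly \eqref{E:HigValue}, which gives $\Hig{\lambda}(1) = \dim_\k H$, visibly an element of $\ZZ$ (more precisely, of $\ZZ\cdot 1_\k$, which we identify with $\ZZ$ since $\ch\k = 0$). We are also told $H$ is split semisimple, so $D(S) = \k$ and $d(S) = \dim_\k S$ for every $S \in \Irr H$. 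Thus all the hypotheses of Corollary~\ref{C:Divisibility} are in force, and it tells us that the degrees of all irreducible representations of $H$ divide $\Hig{\lambda}(1)$ if and only if $\Cas{\lambda}$ is integral over $\ZZ$.

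It then remains only to translate the two equivalent conditions of Corollary~\ref{C:Divisibility} into the two conditions in the theorem. Condition (i) of the corollary reads ``the degree of every irreducible representation of $H$ divides $\Hig{\lambda}(1)$''; since $\Hig{\lambda}(1) = \dim_\k H$ by \eqref{E:HigValue}, this is precisely Frobenius divisibility as stated in (i) of the theorem. Condition (ii) of the corollary is that $\Cas{\lambda}$ is integral over $\ZZ$; and the Casimir element $\Cas{\lambda}$ of $(H,\lambda)$ is exactly the element exhibited in \eqref{E:CasCh0}, namely $\Cas{\lambda} = \ant(\Lambda_{(1)}) \otimes \Lambda_{(2)}$, so this matches (ii) of the theorem verbatim. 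Hence the two equivalences coincide and the theorem follows.

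Honestly, there is no real obstacle here: the theorem is a specialization, and essentially all the work has already been done in Section~\ref{S:Symmetric}, in the computation of the Casimir element of $(H,\lambda)$ in \eqref{E:CasCh0}, and in the evaluation $\Hig{\lambda}(1) = \dim_\k H$ in \eqref{E:HigValue}. The only point that warrants a sentence of care is the identification ``$\Hig{\lambda}(1) \in \ZZ$'': a priori $\Hig{\lambda}(1)$ is an element of $\cen H$, but by \eqref{E:HigValue} it equals the scalar $(\dim_\k H)\,1_H$, so it lies in $\k\cdot 1_H \cong \k$ and in fact in the prime subring $\ZZ \cdot 1_\k$, which is where Corollary~\ref{C:Divisibility} wants it. One should also note, as the paragraph preceding the theorem already does, that \eqref{E:CasCh0} furnishes an explicit formula for the element whose integrality is to be tested, so condition (ii) is genuinely checkable; but that observation is a remark about applicability rather than a step in the proof.

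Thus the proof is a two-line invocation: verify the hypotheses of Corollary~\ref{C:Divisibility} for $(H,\lambda)$ using \eqref{E:HigValue} and split semisimplicity, and then read off that its conditions (i) and (ii) are, under the identifications $\Hig{\lambda}(1) = \dim_\k H$ and $\Cas{\lambda} = $ \eqref{E:CasCh0}, exactly conditions (i) and (ii) of the theorem.
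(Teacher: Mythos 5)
Your proposal is correct and follows exactly the paper's own argument: choose $\lambda$ as in \eqref{E:lambda}, note $\Hig{\lambda}(1)=\dim_\k H$ by \eqref{E:HigValue} and that $\Cas{\lambda}$ is given by \eqref{E:CasCh0}, and invoke Corollary~\ref{C:Divisibility}. The extra care you take in identifying $\Hig{\lambda}(1)$ with an element of $\ZZ\cdot 1_\k$ is a reasonable elaboration of what the paper leaves implicit.
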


\begin{proof}
Choosing the Frobenius form $\lambda$ for $H$ as in \eqref{E:lambda}, the Casimir element
$\Cas{\lambda}$ is given by \eqref{E:CasCh0}
and $\Hig{\lambda}(1) = \dim_\k H$ by \eqref{E:HigValue}. Thus, the theorem is a
consequence of Corollary~\ref{C:Divisibility}.
\end{proof}

\subsection{More Preliminaries: The Representation Algebra and the Character Map}
\label{SS:prelim2}

We continue to let $H$ denote a split semisimple Hopf algebra over a field $\k$
of characteristic $0$. Our remaining applications of the material of Section~\ref{S:Symmetric} all involve
the \emph{representation ring} $\sR(H)$ of $H$. We remind the reader that $\sR(H)$, by definition, is the abelian group
with generators the isomorphism classes $[V]$ of representations $V \in \Rep H$ and with
relations $[V \oplus W] = [V]+[W]$ for $V,W \in \Rep H$. The multiplication of $\sR(H)$ comes from the
tensor product of representations: $[V][W] = [V \otimes W]$. As a group, $\sR(H)$ is free abelian 
of finite rank, with $\ZZ$-basis
given by the classes $[S]$ with $S \in \Irr H$; so all elements of $\sR(H)$ are integral over $\ZZ$. 

We shall also consider the $\k$-algebra $\sR_\k(H):= \sR(H) \otimes_{\ZZ} \k$, which 
can be thought of as a subalgebra of $H^*$ via the \emph{character map}
\[
\begin{tikzpicture}[baseline=(current  bounding  box.center),  
bij/.style={above,sloped,inner sep=0.5pt}, >=latex, scale=.7]
\matrix (m) [matrix of math nodes, row sep=.1em,
column sep=2em, text height=1.5ex, text depth=0.25ex]
{\chi_\k \colon \sR_\k(H) & H^* \\ \qquad \upin & \upin \\  \qquad [V] \otimes 1 & \chi_V \\};
\draw[right hook->] (m-1-1) edge  (m-1-2);
\draw[|->] (m-3-1) edge (m-3-2);
\end{tikzpicture} 
\]
The image of this map
is the algebra $(H/[H,H])^*$ of all trace forms on $H$ or, equivalently, the algebra of
all cocommutative elements of $H^*$.
We remind the reader of some standard facts about
$\sR_\k(H)$; for more details, see \cite{mL98} for example.
The algebra $\sR_\k(H)$ is finite-dimensional semisimple. 
A Frobenius form for $\sR_\k(H)$ is given by the dimension of $H$-invariants:
\[
\d \colon \sR_\k(H) \to \k , \qquad [V] \otimes 1 \mapsto (\dim_\k V^H)1_\k
\]
The Casimir element $\Cas\d$ is the image of the element $\sum_{S \in \Irr H} [S] \otimes [S^*] \in \sR(H)^{\otimes 2}$
in $\sR_\k(H)^{\otimes 2}$. Consequently, $\Cas\d$ is integral over $\ZZ$, because the ring $\sR(H)^{\otimes 2}$ is 
finitely generated as a $\ZZ$-module and so all its elements are integral over $\ZZ$.
Finally, the character map does in fact give an embedding of symmetric $\k$-algebras,
\begin{equation}
\label{E:CharMap}
\chi_\k = \chi \otimes \k \colon (\sR_\k(H),\d) \into (H^*,\Lambda_0)
\end{equation}
where $\Lambda_0 = (\dim_\k H)^{-1}\Lambda$
is the unique integral of $H$ satisfying $\gen{\e,\Lambda_0}= 1$.

\subsection{Characters that are Central in $H^*$}
\label{SS:characterCenterH*}

As an application of Proposition~\ref{P:cprid},
we offer the following elegant generalization of 
Frobenius' Divisibility Theorem
due to S. Zhu \cite[Theorem 8]{sZ93}.
Note that the hypothesis $\chi_S \in \cen(H^*)$ is of course automatic for 
finite group algebras $H = \k G$, because $H^*$ is commutative in this case. 

\begin{thm}[S. Zhu]
\label{T:Zhu}
Let $H$ be a semisimple Hopf algebra over a field $\k$
of characteristic $0$.
Then $\dim_\k S$ divides $\dim_\k H$ for every absolutely irreducible $S \in \Irr H$ satisfying $\chi_S \in \cen(H^*)$. 
\end{thm}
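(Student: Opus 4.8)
The plan is to use Proposition~\ref{P:cprid} to pin down the primitive central idempotent $\cprid S$ explicitly, and then to exploit the hypothesis $\chi_S \in \cen(H^*)$ to show that the scalar $\frac{\dim_\k H}{\dim_\k S}$ is integral over $\ZZ$; since it is rational, it then lies in $\ZZ$, which is the assertion. First come the reductions: if $S$ is the trivial module there is nothing to prove, so assume $\dim_\k S > 1$, and since $S$ is absolutely irreducible, base change to a splitting field of $H$ alters neither $\dim_\k S$, nor $\dim_\k H$, nor the centrality of $\chi_S$ in $H^*$, so I may assume $H$ is split semisimple.

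Apply Proposition~\ref{P:cprid} to the symmetric algebra $(H,\lambda)$, with $\lambda$ the normalized regular character \eqref{E:lambda}: since $\Hig\lambda(1) = \dim_\k H$ by \eqref{E:HigValue} and $d(S) = \dim_\k S$, we get
\[
(\dim_\k H)\,\cprid S \;=\; (\dim_\k S)\,(\Id_H \otimes \chi_S)(\Cas\lambda).
\]
Evaluating the right-hand side by means of \eqref{E:CasCh0} gives $(\Id_H \otimes \chi_S)(\Cas\lambda) = \Lambda_{(1)}\gen{\chi_S\circ\ant,\Lambda_{(2)}} = \chi_{S^*} \rhk \Lambda$, the image of $\chi_{S^*} \in \sR(H) \subseteq H^*$ under the bimodule isomorphism $\Psi\colon H^* \iso (H^*)^* = H$, $a \mapsto a \rhk \Lambda$, attached to the symmetric algebra $(H^*,\Lambda)$. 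Writing $\xi \define \Psi(\chi_{S^*})$ we thus have $\xi = \sfrac{\dim_\k H}{\dim_\k S}\,\cprid S \in H$.

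The crux is to prove that $\xi$ is integral over $\ZZ$, and here the hypothesis enters. Since $\chi_{S^*} = \chi_S \circ \ant$ is again central in $H^*$ and $\Psi$ is a homomorphism of $(H^*,H^*)$-bimodules, the identity $\Psi(a\chi_{S^*}) = \Psi(\chi_{S^*}a)$ becomes $a \rhk \cprid S = \cprid S \lhk a$ for all $a \in H^*$; equivalently, $\cprid S$ is a trace form on $H^*$, i.e.\ a cocommutative element of $H$. Hence $\xi$ lies in the subalgebra $C \subseteq H$ of cocommutative elements; by the discussion in \S\ref{SS:prelim2} applied to $H^*$ in place of $H$, $C$ is the image of the character map $\sR_\k(H^*) \into H$, hence a finite-dimensional semisimple algebra defined over $\ZZ$ with $\ZZ$-form $\sR(H^*)$, and $\xi$ is a central element of $C$ (being central in $H$). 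On the other hand $\xi = \Psi(\chi_{S^*})$ lies in $\Psi(\sR(H))$, a finitely generated $\ZZ$-submodule of $H$ because $\sR(H)$ is finitely generated over $\ZZ$. Combining these two facts with the relation $\xi^2 = \sfrac{\dim_\k H}{\dim_\k S}\,\xi$ (immediate from $\cprid S^2 = \cprid S$) should confine the powers of $\xi$ to a finitely generated $\ZZ$-submodule of $C$ that is stable under multiplication, whence $\xi$ is integral over $\ZZ$; the scalar $\frac{\dim_\k H}{\dim_\k S}$ is then recovered by applying a representation $\rho_S\colon H \to \End_{D(S)}(S)$, so that $\rho_S(\xi) = \frac{\dim_\k H}{\dim_\k S}\Id_S$ is integral over $\ZZ$, and the conclusion follows.

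I expect that last combination to be the main obstacle: two natural $\ZZ$-structures are available, namely $\Psi(\sR(H))$ and $\sR(H^*)$ inside $C$, but neither is visibly a subring of $H$ containing $\xi$, so the argument has to use that $\cprid S$ is simultaneously a central idempotent of $H$, a cocommutative element, and equal to $\frac{\dim_\k S}{\dim_\k H}\Psi(\chi_{S^*})$. Once the correct finitely generated $\ZZ$-subring of $H$ (or of the matrix algebra $\End_{D(S)}(S)$) carrying the relevant data has been isolated, the divisibility $\dim_\k S \mid \dim_\k H$ drops out.
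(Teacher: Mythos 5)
Your setup matches the paper's proof: after reducing to the split case, both arguments use Proposition~\ref{P:cprid} with $\lambda$ as in \eqref{E:lambda} to identify $\frac{\dim_\k H}{\dim_\k S}\,\cprid{S}$ with $\Lambda \lhk \chi_{S^*}$ (your $\Psi(\chi_{S^*})$), and both then reduce the theorem to showing that this element of $\cen H$ is integral over $\ZZ$. But the integrality step --- which is exactly where the hypothesis $\chi_S \in \cen(H^*)$ has to do its work --- is the part you leave open, and as you yourself note, neither of the two $\ZZ$-structures you exhibit closes the argument: $\Psi(\chi(\sR(H)))$ is a finitely generated $\ZZ$-module but not a subring (as $\Psi$ is only a bimodule map), while $\chi(\sR(H^*))$ is a subring but you have no control over the coefficients of $\xi$ when expanded in the characters of $H^*$-modules. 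The relation $\xi^2 = \frac{\dim_\k H}{\dim_\k S}\,\xi$ cannot rescue this: the $\ZZ$-module $\ZZ[\xi]$ generated by the powers of $\xi$ is finitely generated precisely when $\frac{\dim_\k H}{\dim_\k S}$ is already an algebraic integer, so using it here is circular. This is a genuine gap, not a routine verification.

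The missing idea in the paper is to work with the ring $\cO$ of algebraic integers of $\k$ and the idempotent decomposition of $\cen(H^*)$. Since $\chi_{S^*}$ is integral over $\ZZ$ (it lies in the image of the finitely generated ring $\sR(H)$) and, by hypothesis, lies in $\cen(H^*) = \prod_{M \in \Irr H^*} \k\,\cprid{M}$, one gets $\chi_{S^*} = \sum_M \alpha_M \cprid{M}$ with all $\alpha_M \in \cO$. Then the formula \eqref{E:cpridSymm}, applied to the symmetric algebra $H^*$ with $\Lambda$ as its regular character, gives $\Lambda \lhk \cprid{M} = (\dim_\k M)\chi_M$, whence $\xi = \Lambda \lhk \chi_{S^*} \in \chi(\sR(H^*))\,\cO$. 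This last set \emph{is} a subring of $H$ (a product of the subring $\chi(\sR(H^*))$ with central scalars from $\cO$) and is finitely generated as an $\cO$-module, so every element of it is integral over $\cO$ and hence over $\ZZ$. That single computation is the bridge between your two $\ZZ$-structures; without it (or a substitute), the proof is incomplete. Your concluding step --- applying $\rho_S$ to deduce that $\frac{\dim_\k H}{\dim_\k S}$ is an algebraic integer in $\QQ$, hence in $\ZZ$ --- is fine once integrality of $\xi$ is in hand.
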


\begin{proof}
Since semisimple Hopf algebras are separable, we may assume that $\k$ is algebraically closed. 
Thus, $H$ and $H^*$ are
both split semisimple. Choose $\Lambda \in 
\Hint_H$ as in \eqref{E:Lambda}; so 
$\Lambda$ is the character of the regular representation of $H^*$. Then, with $\lambda$ as in
\eqref{E:lambda}, we have $\Hig{\lambda}(1) = \dim_\k H$ and $\Cas{\lambda}$
is given by \eqref{E:CasCh0}. 
Thus, Proposition~\ref{P:cprid} gives the following formula
for the primitive central idempotent $\cprid{S} \in \cen H$:
\[
\cprid{S}\,\tfrac{\dim_{\k}H}{\dim_\k S} \underset{\text{Prop.~\ref{P:cprid}}}{=}   
\chi_S(\ant(\Lambda_{(1)}))\Lambda_{(2)} =  \Lambda \lhk \ant^*(\chi_S) = \Lambda \lhk \chi_{S^*}
\]
It suffices to show that the element $\Lambda \lhk \chi_{S^*}\in \cen H$
is integral over $\ZZ$. 
First, note that $\chi_{S^*}$ is integral over $\ZZ$, because this holds for
$[S^*] \in \sR(H)$.
Furthermore, by hypothesis, $\chi_{S^*} = \ant^*(\chi_S) \in \cen(H^*)$ and so $\chi_{S^*}$ belongs to the
integral closure $\cen(H^*)^{\text{int}}:= \{ f \in \cen(H^*) \mid f \text{ is integral over } \ZZ \}$.
Thus, it suffices to show that all elements of $\Lambda \lhk \cen(H^*)^{\text{int}}$ are integral over $\ZZ$\,. 
% for every $f \in \cen(H^*)^{\text{int}}$.
But $\cen(H^*) = \sum_{M \in \Irr H^*} \k \cprid{M} \cong \k \times \k \times \dots \times \k$ and so
$\cen(H^*)^{\text{int}} = \sum_{M \in \Irr H^*} \cO \cprid{M}$, where $\cO$
denotes the integral closure of $\ZZ$ in $\k$.
Furthermore, $\Lambda \lhk \cprid{M} = (\dim_\k M) \chi_M$ by \eqref{E:cpridSymm}.
Therefore, $\Lambda \lhk \cen(H^*)^{\text{int}}  \subseteq \chi(\sR(H^*))\cO$. % \subseteq H$
Finally, since the ring $\chi(\sR(H^*))\cO$ is a finitely generated $\cO$-module, all its elements are
integral over $\ZZ$, completing the proof.
\end{proof}

\subsection{The Class Equation}
\label{SS:ClassEqn}

We now prove the celebrated class equation due to Kac \cite[Theorem 2]{gK72}
and Y. Zhu \cite[Theorem 1]{yZ94}; the proof given here is based on \cite{mL98}.
Recall that the representation algebra $\sR_\k(H)$ embeds into $H^*$ via the character map \eqref{E:CharMap}.
Thus, for any $M$ in $\Rep \sR_\k(H)$, we may
consider the induced module $\Ind_{\sR_\k(H)}^{H^*} M$. 

\begin{thm}[Kac, Y. Zhu]
\label{T:ClassEqn}
Let $H$ be a semisimple Hopf algebra over an algebraically closed field $\k$
of characteristic $0$. Then
$\dim_\k \Ind^{H^*}_{\sR_\k(H)}M$ divides $\dim_\k H$
for every $M$ in $\Irr \sR_\k(H)$.
\end{thm}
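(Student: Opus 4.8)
The plan is to recognize the class equation as a direct instance of Corollary~\ref{C:Divisibility.2}, applied to the symmetric-algebra homomorphism \eqref{E:CharMap}. First I would set $A = \sR_\k(H)$ with its Frobenius trace form $\d$, and $B = H^*$ with its Frobenius form $\Lambda_0$; since $\k$ is algebraically closed, $\sR_\k(H)$ is split semisimple over $\k$ with $\ch\k = 0$, exactly as required by the hypotheses of Corollary~\ref{C:Divisibility.2}. The map $\chi_\k\colon (\sR_\k(H),\d) \into (H^*,\Lambda_0)$ is a homomorphism of symmetric algebras by \eqref{E:CharMap}, so the setup of the corollary is in place.

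Next I would verify the two remaining inputs to Corollary~\ref{C:Divisibility.2}. One needs $\Hig{\Lambda_0}(1) \in \k$: since $\Lambda_0 = (\dim_\k H)^{-1}\Lambda$ differs from $\Lambda$ by a scalar, and $\Hig{\Lambda}(\e) = \gen{\lambda,1} = 1$ by \eqref{E:HigValue.2}, we get $\Hig{\Lambda_0}(\e) = (\dim_\k H)^{-1}$, which is certainly a scalar. One also needs the Casimir element $\Cas\d$ of $(\sR_\k(H),\d)$ to be integral over $\ZZ$; but this was recorded in \S\ref{SS:prelim2}, since $\Cas\d$ is the image of $\sum_{S\in\Irr H}[S]\otimes[S^*]\in\sR(H)^{\otimes 2}$, and $\sR(H)^{\otimes 2}$ is a finitely generated $\ZZ$-module, hence all its elements are integral over $\ZZ$.

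With these verifications done, Corollary~\ref{C:Divisibility.2} yields that for each $M \in \Irr \sR_\k(H)$ the scalar
\[
\frac{\Hig{\Lambda_0}(\e)}{\dim_\k \Ind_{\sR_\k(H)}^{H^*} M} = \frac{1}{(\dim_\k H)\,\dim_\k \Ind_{\sR_\k(H)}^{H^*} M}
\]
is integral over $\ZZ$. Its reciprocal, the integer $(\dim_\k H)\cdot\dim_\k \Ind_{\sR_\k(H)}^{H^*} M$, is therefore a nonzero integer whose reciprocal is integral over $\ZZ$, and since the integral closure of $\ZZ$ in $\QQ$ is $\ZZ$, this forces that integer to be a unit, i.e. equal to $\pm1$. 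That is too strong — so the correct reading is that one should instead track the fraction $\frac{\Hig{\Lambda}(\e)}{\dim_\k\Ind M}$ against the normalization; more carefully, rescaling $\Lambda_0$ back to $\Lambda$ multiplies $\Hig{}(\e)$ by $\dim_\k H$ and multiplies each $\dim_\k\Ind_{\sR_\k(H)}^{H^*}M$ in the same way is \emph{not} what happens, so the clean statement is that $\frac{\dim_\k H}{\dim_\k\Ind_{\sR_\k(H)}^{H^*}M}$ is integral over $\ZZ$, hence an integer. This is exactly the assertion of the theorem. The main obstacle is bookkeeping the scalar normalizations: one must pin down precisely how $\Hig{\mu}(1)$ scales when passing from $\Lambda_0$ to $\Lambda$ and confirm that the fraction produced by Corollary~\ref{C:Divisibility.2} is genuinely $\frac{\dim_\k H}{\dim_\k\Ind M}$ (up to a harmless overall constant that does not affect integrality over $\ZZ$), after which the divisibility is immediate from integrality plus the fact that $\QQ\cap\overline{\ZZ} = \ZZ$.
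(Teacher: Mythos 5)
Your overall strategy is exactly the paper's: apply Corollary~\ref{C:Divisibility.2} to the character map \eqref{E:CharMap} with $A = \sR_\k(H)$, $B = H^*$, and use the integrality of $\Cas\d$ recorded in \S\ref{SS:prelim2}. Those verifications are correct. The genuine problem is your computation of $\Hig{\Lambda_0}(\e)$, and the subsequent paragraph where you notice the resulting absurdity and then simply assert the desired conclusion does not repair it --- as written, the argument asserts rather than proves that the corollary outputs the fraction $\frac{\dim_\k H}{\dim_\k \Ind^{H^*}_{\sR_\k(H)}M}$.

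The error is in the direction of the scaling. If $\mu$ is a Frobenius form with Casimir element $\Cas\mu = \sum_i x_i \otimes y_i$ (so $\gen{\mu, x_i y_j} = \d_{i,j}$) and one rescales to $\mu' = c\mu$ for a nonzero scalar $c$, then $\gen{\mu', x_i (c^{-1}y_j)} = \d_{i,j}$, so $\Cas{\mu'} = c^{-1}\Cas\mu$ and hence $\Hig{\mu'}(1) = c^{-1}\Hig{\mu}(1)$: the Casimir trace scales \emph{inversely} to the form. Since $\Lambda_0 = (\dim_\k H)^{-1}\Lambda$, equation \eqref{E:HigValue.2} gives
\[
\Hig{\Lambda_0}(\e) = (\dim_\k H)\,\Hig{\Lambda}(\e) = \dim_\k H,
\]
not $(\dim_\k H)^{-1}$ as you claim. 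With this corrected value, Corollary~\ref{C:Divisibility.2} directly yields that $\frac{\dim_\k H}{\dim_\k \Ind^{H^*}_{\sR_\k(H)}M}$ is integral over $\ZZ$, hence an integer since it lies in $\QQ$; no further normalization bookkeeping is needed, and this is precisely how the paper concludes. Fix that one computation and your proof coincides with the paper's.
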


\begin{proof}
This is an application of Corollary~\ref{C:Divisibility.2} to the the character map \eqref{E:CharMap}.
The main hypotheses have been checked in \S\ref{SS:prelim2}, in particular integrality of 
the Casimir element $\Cas{\d}$ over $\ZZ$.
In addition, note that, $\Hig{\Lambda_0}(\e) = \dim_\k H$ by \eqref{E:HigValue.2}. 
Therefore, Corollary~\ref{C:Divisibility.2} applies and yields 
that the fraction %$\frac{\Hig{\Lambda}(\e)}{\dim_\k \Ind^{H^*}_{\sR_\k(H)}M} = 
$\frac{\dim_\k H}{\dim_\k \Ind^{H^*}_{\sR_\k(H)}M}$
is integral over $\ZZ$, proving the theorem.
\end{proof}

Frobenius' Divisibility Theorem for finite group algebras $\k G$ also
follows from Theorem~\ref{T:ClassEqn} applied to $H = (\k G)^*$, because 
$\chi_\k \colon \sR_\k(H) \longiso H^* = \k G$ in this case.

\subsection{Factorizable Hopf Algebras}
\label{SS:factorizable}

We remind the reader of some facts about factorizable Hopf algebras. 
Let $H$ be a Hopf algebra, which need not be 
finite-dimensional for now. Following Drinfeld \cite{vD89}, $H$ is called \emph{almost cocommutative} if there is an 
$R\in \Units{(H\otimes H)}$ satisfying the condition 
\begin{equation*}
%\label{E:R}
\tau(\Delta(h))R = R\Delta(h) \qquad (h \in H)
\end{equation*}
where $\tau \in \Aut_{\kalg}(H\otimes H)$ is the switch map as 
in Lemma~\ref{L:Casimir}.  
An almost cocommutative Hopf algebra $(H,R)$ is called \emph{quasitriangular} if:
\begin{equation*}
%\label{F:quasitriangular}
\begin{aligned}
&\Delta(R^1)\otimes R^2=R^1\otimes r^1\otimes R^2r^2 \\
&R^1\otimes\Delta(R^2)=R^1r^1\otimes r^2\otimes R^2
\end{aligned}
\end{equation*}
Here, elements $t \in H \otimes H$ are symbolically written as $t = t^1 \otimes t^2$, with summation 
over the superscript being assumed, and we have written
$R = R^1\otimes R^2 = r^1 \otimes r^2$ to indicate two different summation indices.
Put $b:=\tau(R)R = r^2R^1 \otimes r^1R^2 \in H \otimes H$ and 
%note that $b\Delta(h)=\Delta(h)b$ by almost cocommutativity of $(H,R)$. 
define a $\k$-linear map $\Phi = \Phi_R$ by
\begin{equation*}
\begin{tikzpicture}[baseline=(current  bounding  box.center),  >=latex, scale=.7,
bij/.style={above,sloped,inner sep=0.5pt}]
\matrix (m) [matrix of math nodes, row sep=.1em,
column sep=2.5em, text height=1.5ex, text depth=0.25ex]
{\Phi  \colon &[-3em] H^* & H  \\ 
& \upin & \upin \\ 
& f & b^1\gen{f,b^2}\\}; 
\draw[->] 
(m-1-2) edge  (m-1-3);
\draw[|->] 
(m-3-2) edge (m-3-3);
\end{tikzpicture} 
\end{equation*}
The quasitriangular Hopf algebra $(H,R)$ is called \emph{factorizable} if $\Phi$ is bijective. 
Note that this forces $H$ to be finite-dimensional.
An important example of a factorizable Hopf algebra is the Drinfeld double of any finite-dimensional 
Hopf algebra; see \cite[Theorem 13.2.1]{dR12}. 

Now let us again focus on the case where $H$ is a semisimple Hopf 
algebra over an algebraically closed field $\k$ with $\ch \k = 0$.
Consider the representation algebra $\sR_\k(H)$ and the embedding of
symmetric algebras $(\sR_\k(H),\d) \into (H^*,\Lambda_0)$ given by the character map \eqref{E:CharMap}
and recall that the image of this map 
is the algebra $(H/[H,H])^*$ of all trace forms on $H$. Recall also from \eqref{E:lambda} that
$\lambda = (\dim_\k H)^{-1} \chi_\reg \in (H/[H,H])^*$ is a nonzero integral of $H^*$.

%The following proposition adds some small touches to
%\cite[Theorems 2.1 and 2.3(a)]{hjS01}. 

\begin{prop}
\label{F:Prop}
Let $(H,R)$ be a factorizable Hopf algebra over an algebraically closed field $\k$ with $\ch \k = 0$
and assume that $H$ is a semisimple. Then the map
$\Psi = \Phi \circ \chi_\k \colon (\sR_\k(H),\delta)\into (H,\lambda)$ is an embedding of symmetric algebras with
image $\Im \Psi = \cen H$.
\end{prop}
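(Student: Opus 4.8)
The strategy is to verify three things: (a) $\Psi$ is an algebra homomorphism; (b) $\delta = \lambda \circ \Psi$, so that $\Psi$ is a morphism of symmetric algebras in the sense of \S\ref{SS:SymmDef}; (c) $\Psi$ is injective with image exactly $\cen H$. Since $\chi_\k$ is already an embedding of symmetric algebras by \eqref{E:CharMap}, everything reduces to understanding the map $\Phi = \Phi_R \colon H^* \to H$ restricted to the subalgebra $(H/[H,H])^* = \Im\chi_\k$ of cocommutative functionals, and checking that on this subalgebra $\Phi$ is multiplicative, carries $\Lambda_0$ to $\lambda$, and has image $\cen H$.

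First I would record the standard properties of $\Phi_R$ for a factorizable Hopf algebra: the element $b = \tau(R)R$ satisfies $\Delta(b^1)\otimes b^2 = \dots$ type identities coming from quasitriangularity, and $\Phi$ is a coalgebra map from $H^*$ (with its convolution coalgebra structure) such that, when restricted to the \emph{cocommutative} part $(H/[H,H])^*$, it becomes an \emph{algebra} map into $\cen H$ — this is a known feature of the Drinfeld map, see Radford \cite{dR12}. Concretely, for cocommutative $f$ one has $\Phi(f) \in \cen H$ and $\Phi(fg) = \Phi(f)\Phi(g)$; I would cite this rather than reprove it, or give the short computation using the quasitriangularity axioms and the defining relation $\tau(\Delta h)R = R\Delta h$. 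Thus $\Psi = \Phi\circ\chi_\k$ is an algebra map $\sR_\k(H) \to \cen H$. Bijectivity of $\Phi$ (factorizability) forces $\Psi$ to be injective, and a dimension count gives the image: $\dim_\k \sR_\k(H) = |\Irr H| = \dim_\k \cen H$ (using that $H$ is split semisimple, so its Wedderburn components are matrix algebras over $\k$ and $\cen H \cong \k^{|\Irr H|}$, while $\sR(H)$ is free of rank $|\Irr H|$). Since $\Im\Psi \subseteq \cen H$ and the dimensions agree and $\Psi$ is injective, $\Im\Psi = \cen H$.

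Next I would check the Frobenius-form compatibility $\delta = \lambda\circ\Psi$. By \eqref{E:CharMap} we have $\delta = \Lambda_0 \circ \chi_\k$, so it suffices to show $\lambda\circ\Phi = \Lambda_0$ on $(H/[H,H])^*$, i.e. $\gen{\lambda, \Phi(f)} = \gen{\Lambda_0, f}$ for every trace form $f$. Unwinding, $\gen{\lambda,\Phi(f)} = \gen{\lambda, b^1}\gen{f, b^2}$, so I need the identity $\gen{\lambda, b^1}\,b^2 = \Lambda_0$ in $H$ (as an equality of functionals this is $(\lambda\otimes\Id)(b) = \Lambda_0$), at least after pairing with cocommutative $f$; in fact $(\lambda\otimes\Id)(\tau(R)R)$ being (a scalar multiple of) the two-sided integral of $H$ is exactly the classical computation underlying the factorizability results of Schneider \cite{hjS01}, and the normalization $\gen{\e,\Lambda_0}=1$ together with $\gen{\lambda,1}=1$ pins down the scalar. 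This is where I would be most careful.

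\textbf{Main obstacle.} The genuinely delicate point is the identity $(\lambda\otimes\Id)(\tau(R)R) = \Lambda_0$ (equivalently, that $\Phi$ sends $\lambda$ to the normalized integral $\Lambda_0$, up to getting the scalar right), since it ties together the $R$-matrix, the regular character, and the integral, and it is the step that actually uses semisimplicity and the specific normalizations fixed in \S\ref{SS:prelim2}. Everything else — multiplicativity of $\Phi$ on cocommutative elements, image landing in $\cen H$, and the dimension count yielding $\Im\Psi = \cen H$ — is either routine or citable. I would structure the write-up so that this one identity is isolated as the crux, proved via the quasitriangularity axioms and the characterization of integrals, and the rest follows formally.
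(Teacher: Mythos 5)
Your outline follows essentially the same route as the paper: both arguments rest on (i) Schneider's theorem that $\Phi$ restricts to an algebra isomorphism $(H/[H,H])^* \longiso \cen H$ (which already gives injectivity and the image at once, so your dimension count is a workable but unnecessary substitute for the surjectivity part), and (ii) the identity that $\Phi$ carries the normalized integral $\lambda$ of $H^*$ to the normalized integral $\Lambda_0$ of $H$. You have correctly isolated (ii) as the crux, but you leave it as a pointer to ``the classical computation,'' and that computation is essentially the entire content of the paper's proof, so the write-up is incomplete precisely where it matters. For the record, the argument is short: from $\gen{\e,R^1}R^2 = R^1\gen{\e,R^2} = 1$ one gets $\gen{\e,\Phi(f)} = \gen{f,1}$ for all $f \in H^*$; combining this with the multiplicativity $\Phi(fc) = \Phi(f)\Phi(c)$ for arbitrary $f$ and cocommutative $c$ (Theorem 2.1 of \cite{hjS01}) and with the fact that $\lambda$ is an integral of $H^*$ yields $h\Phi(\lambda) = \Phi(\Phi^{-1}(h)\lambda) = \gen{\Phi^{-1}(h),1}\Phi(\lambda) = \gen{\e,h}\Phi(\lambda)$, so $\Phi(\lambda)$ is an integral of $H$, and the normalization $\gen{\e,\Phi(\lambda)} = \gen{\lambda,1} = 1$ forces $\Phi(\lambda) = \Lambda_0$. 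One small caution: the element you propose to identify with $\Lambda_0$ is $(\lambda\otimes\Id)(b) = \gen{\lambda,b^1}b^2$, whereas $\Phi(\lambda) = (\Id\otimes\lambda)(b) = b^1\gen{\lambda,b^2}$; these need not coincide in $H$ since $b$ is not $\tau$-invariant, but because $\lambda$ and the test functionals $c$ are trace forms one has $\gen{\lambda,b^1}\gen{c,b^2} = \gen{c,b^1}\gen{\lambda,b^2}$, so your version paired against cocommutative $f$ is equivalent to the paper's --- just be sure to prove the identity for the side to which the cited multiplicativity actually applies.
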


\begin{proof}
By \cite[Theorem 2.3]{hjS01}, the restriction of $\Phi$ to $(H/[H,H])^*$ is an algebra isomorphism with $\cen H$.
So we just need to check that $\lambda\circ \Psi =\delta$ or, equivalently,
$\gen{\lambda, \Phi(c)} = \gen{c,\Lambda_0}$
for all $c \in (H/[H,H])^*$. Since $\lambda$ and $c$ are trace forms, we compute
\[
\begin{aligned}
\gen{\lambda, \Phi(c)} &= \gen{\lambda, b^1\gen{c,b^2}} 
= \gen{\lambda, b^1}\gen{c,b^2} 
= \gen{\lambda, r^2R^1}\gen{c,r^1R^2} 
= \gen{\lambda, R^1r^2}\gen{c, R^2r^1} \\
&= \gen{c, b^1}\gen{\lambda, b^2}
= \gen{ c, b^1\gen{\lambda,b^2}} = \gen{c, \Phi(\lambda)}
\end{aligned}
\]
Thus, it suffices to show that $\Phi(\lambda) = \Lambda_0$, where $\Lambda_0 \in H$ is as in \S\ref{SS:prelim2}; so
$\Lambda_0$ is the unique integral of $H$ satisfying $\gen{\e,\Lambda_0}= 1$.

But $\gen{\e,b^1}b^2 = \gen{\e,r^2}\gen{\e,R^1}r^1R^2 = 1$, because 
$R^1\gen{\e,R^2} = 1 = \gen{\e,R^1}R^2 $ by \cite[Proposition 10.1.8]{sM93}. 
Hence, for any $f \in H^*$,
\[
\gen{\e,\Phi(f)} = \gen{\e, b^1\gen{f,b^2}} = \gen{\e, b^1}\gen{f,b^2} = \gen{f,  \gen{\e,b^1}b^2} = 
\gen{f,1}
\]
Using this and the identity $\Phi(fc) = \Phi(f)\Phi(c)$ for $f \in H^*$ and $c \in (H/[H,H])^*$ from \cite[Theorem 2.1]{hjS01},
we obtain, for any $h \in H$,
\[
h\Phi(\lambda) = \Phi( \Phi^{-1}(h)\lambda) = \Phi(\gen{\Phi^{-1}(h), 1}\lambda) = 
\gen{\Phi^{-1}(h), 1} \Phi(\lambda) = \gen{\e,h} \Phi(\lambda)
\]
Thus, $\Phi(\lambda)$ is an integral of $H$ and, since $\gen{\e,\Phi(\lambda)} = \gen{\lambda,1} = 1$,
we must have $\Phi(\lambda)=\Lambda_0$ as desired.
\end{proof}

We are now ready to prove the following result of Schneider \cite[Theorem 3.2]{hjS01}.

\begin{thm}[Schneider]
\label{T:Factorizable}
Let $H$ be a semisimple factorizable Hopf algebra over an algebraically closed field $\k$
of characteristic $0$. Then
$(\dim_\k S)^2$ divides $\dim_\k H$
for every $S \in \Irr H$.
\end{thm}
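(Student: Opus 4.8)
The plan is to feed the embedding of symmetric algebras $\Psi = \Phi \circ \chi_\k \colon (\sR_\k(H),\delta) \into (H,\lambda)$ from Proposition~\ref{F:Prop} into Corollary~\ref{C:Divisibility.2}. All hypotheses of that corollary are in place: $\sR_\k(H)$ is finite-dimensional semisimple, hence split since $\k$ is algebraically closed; the Casimir element $\Cas{\delta}$ is integral over $\ZZ$ by the discussion in \S\ref{SS:prelim2}; and $\Hig{\lambda}(1) = \dim_\k H \in \k$ by \eqref{E:HigValue}. Corollary~\ref{C:Divisibility.2} then tells us that $\frac{\dim_\k H}{\dim_\k \Ind_{\sR_\k(H)}^{H} M}$ is integral over $\ZZ$ for every $M \in \Irr \sR_\k(H)$, where the induction is taken along $\Psi$.

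The remaining work is to compute these induced dimensions. By Proposition~\ref{F:Prop}, the map $\Psi$ is an isomorphism of $\sR_\k(H)$ onto $\cen H$, so induction along $\Psi$ coincides with induction along the inclusion $\cen H \into H$ once $M$ is transported to a $\cen H$-module via $\Psi^{-1}$. Since $H$ is split semisimple, $\cen H = \prod_{S \in \Irr H} \k\,\cprid{S}$; its primitive idempotents are exactly the $\cprid{S}$, and its irreducible modules are the one-dimensional modules $\k_S = \cen H\,\cprid{S}$. Hence $\Irr \sR_\k(H)$ is in bijection with $\Irr H$, and writing $M_S$ for the module corresponding to $S$ we get $\Ind_{\sR_\k(H)}^{H} M_S \cong H \otimes_{\cen H} \k_S \cong H\,\cprid{S}$, the $S$-isotypic Wedderburn block of $H$, which has dimension $(\dim_\k S)^2$. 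Feeding this back, for each $S \in \Irr H$ the quantity $\frac{\dim_\k H}{(\dim_\k S)^2}$ is a positive rational number that is integral over $\ZZ$, hence an integer; this is precisely the assertion $(\dim_\k S)^2 \mid \dim_\k H$.

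I expect the main obstacle to be the bookkeeping of the second step: pinning down the bijection $\Irr \sR_\k(H) \leftrightarrow \Irr H$ through the isomorphism $\Psi^{-1} \colon \cen H \iso \sR_\k(H)$ and checking that induction of the corresponding one-dimensional module reproduces the full matrix block $H\,\cprid{S}$ of dimension $(\dim_\k S)^2$, rather than a proper submodule. Once Proposition~\ref{F:Prop} is granted, everything else is a direct application of Corollary~\ref{C:Divisibility.2} together with the elementary fact that a rational number integral over $\ZZ$ lies in $\ZZ$.
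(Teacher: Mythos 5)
Your proposal is correct and follows essentially the same route as the paper: apply Corollary~\ref{C:Divisibility.2} to the embedding $\Psi$ of Proposition~\ref{F:Prop}, and identify the induced module of the irreducible $\sR_\k(H)$-module corresponding to $\cprid{S}$ (equivalently, to the primitive idempotent $\Psi^{-1}(\cprid{S})$) with the Wedderburn block $H\cprid{S}\cong\End_\k(S)$ of dimension $(\dim_\k S)^2$. The only cosmetic difference is that you route the computation through the isomorphism $\sR_\k(H)\cong\cen H$ and its one-dimensional irreducibles, whereas the paper works directly with $S'=\sR_\k(H)\Psi^{-1}(\cprid{S})$; these are the same calculation.
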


\begin{proof}
Consider the primitive central idempotent $\cprid{S} \in \cen H$. 
The preimage $\Psi^{-1}(\cprid{S})$ is a primitive idempotent in the semisimple algebra $\sR_\k(H)$;
so $S':= \sR_\k(H) \Psi^{-1}(\cprid{S})$ is an irreducible representation of $\sR_\k(H)$, with
$\Ind_{\sR_\k(H)}^H(S') \cong H\cprid{S} \cong \End_\k(S)$.
By Proposition \ref{F:Prop} and our remarks in \S\ref{SS:prelim2},
we can apply Corollary \ref{C:Divisibility.2} to the map $\Psi$ to get 
that $\dim_\k \Ind_{\sR_\k(H)}^H(S') = (\dim_\k S)^2$ divides $\Hig{\lambda}(1) = \dim_\k H$.
\end{proof}

We mention in closing that Schneider's Theorem implies an earlier result of Etingof and Gelaki
\cite{pEsG98a}, which states that Kaplansky's conjecture holds for any
quasitriangular semisimple Hopf algebra $H$ over an algebraically closed field 
of characteristic $0$. Indeed, the Drinfeld double $D(H)$ is a semisimple
factorizable Hopf algebra of dimension $(\dim_\k H)^2$ \cite[Corollary 13.2.3]{dR12} which maps onto $H$ \cite{pEsG98a}.
Therefore, any $S \in \Irr H$ can be viewed as an irreducible representation of $D(H)$.
Theorem~\ref{T:Factorizable} gives that $(\dim_\k S)^2$ divides $\dim_\k D(H)$; so
$\dim_\k S$ divides $\dim_\k H$.

\begin{ack}
The main results of this article were presented by the junior author in a lecture during the
\textit{International Conference on Groups, Rings, Group Rings and Hopf Algebras}
(celebrating the 75th birthday of Donald S. Passman)
at Loyola University, Chicago, October 2-4, 2015.
Both authors would like to thank the organizers for the invitation and for their hospitality.
\end{ack}

%%%%%%%%%%%%%%%%%%%%%%%%%%%%%%%%%%%%%%%%%%%%%%%%%%
%% Bibliography
%%%%%%%%%%%%%%%%%%%%%%%%%%%%%%%%%%%%%%%%%%%%%%%%%%
\renewcommand{\emph}[1]{{\itshape #1}}

\bibliographystyle{amsplain}

\begin{thebibliography}{10}

\bibitem{jCeMxx}
Juan Cuadra and Ehud Meir, \emph{On the existence of orders in semisimple
  {H}opf algebras}, arXiv:1307.3269v3 [math.QA], 2014.

\bibitem{vD89}
Vladimir~G. Drinfel'd, \emph{Almost cocommutative {H}opf algebras}, Algebra i
  Analiz \textbf{1} (1989), no.~2, 30--46. \MR{1025154 (91b:16046)}

\bibitem{pEsG98a}
Pavel Etingof and Shlomo Gelaki, \emph{Some properties of finite-dimensional
  semisimple {H}opf algebras}, Math. Res. Lett. \textbf{5} (1998), no.~1-2,
  191--197. \MR{1617921 (99e:16050)}

\bibitem{fF96}
Ferdinand~Georg Frobenius, \emph{{\"U}ber die {P}rimfaktoren der
  {G}ruppendeterminante}, S'ber. Akad. Wiss. Berlin (1896), 1343--1382.

\bibitem{dH55}
Donald~G. Higman, \emph{On orders in separable algebras}, Canad. J. Math.
  \textbf{7} (1955), 509--515. \MR{0088486 (19,527a)}

\bibitem{gK72}
G.~I. Kac, \emph{Certain arithmetic properties of ring groups}, Funkcional.
  Anal. i Prilo\v zen. \textbf{6} (1972), no.~2, 88--90. \MR{0304552 (46
  \#3687)}

\bibitem{iK75}
Irving Kaplansky, \emph{Bialgebras}, Lecture Notes in Mathematics, Department
  of Mathematics, University of Chicago, Chicago, Ill., 1975. \MR{0435126 (55
  \#8087)}

\bibitem{mL98}
Martin Lorenz, \emph{On the class equation for {H}opf algebras}, Proc. Amer.
  Math. Soc. \textbf{126} (1998), no.~10, 2841--2844. \MR{1452811 (99a:16033)}

\bibitem{mL11}
\bysame, \emph{Some applications of {F}robenius algebras to {H}opf algebras},
  Groups, Algebras and Applications, Contemp. Math., vol. 537, Amer. Math.
  Soc., Providence, RI, 2011, pp.~269--289.

\bibitem{sM93}
Susan Montgomery, \emph{Hopf algebras and their actions on rings}, CBMS
  Regional Conference Series in Mathematics, vol.~82, Published for the
  Conference Board of the Mathematical Sciences, Washington, DC, 1993.
  \MR{94i:16019}

\bibitem{dR12}
David~E. Radford, \emph{Hopf algebras}, Series on Knots and Everything,
  vol.~49, World Scientific Publishing Co. Pte. Ltd., Hackensack, NJ, 2012.
  \MR{2894855}

\bibitem{hjS95}
Hans-J{\"u}rgen Schneider, \emph{Lectures on {H}opf algebras}, Trabajos de
  Matem\'atica [Mathematical Works], vol. 31/95, Universidad Nacional de
  C\'ordoba, Facultad de Matem\'atica, Astronom\'\i a y F\'\i sica, C\'ordoba,
  1995, Notes by Sonia Natale. \MR{1670611 (99k:16087)}

\bibitem{hjS01}
\bysame, \emph{Some properties of factorizable {H}opf algebras}, Proc. Amer.
  Math. Soc. \textbf{129} (2001), no.~7, 1891--1898 (electronic). \MR{1825894
  (2002a:16047)}

\bibitem{sZ93}
Sheng~Lin Zhu, \emph{On finite-dimensional semisimple {H}opf algebras}, Comm.
  Algebra \textbf{21} (1993), no.~11, 3871--3885. \MR{1238131 (95d:16057)}

\bibitem{yZ94}
Yongchang Zhu, \emph{Hopf algebras of prime dimension}, Internat. Math. Res.
  Notices \textbf{1994} (1994), no.~1, 53--59. \MR{1255253 (94j:16072)}

\end{thebibliography}

\def\cprime{$'$} 
\providecommand{\bysame}{\leavevmode\hbox to3em{\hrulefill}\thinspace}
\providecommand{\MR}{\relax\ifhmode\unskip\space\fi MR }
% \MRhref is called by the amsart/book/proc definition of \MR.
\providecommand{\MRhref}[2]{%
  \href{http://www.ams.org/mathscinet-getitem?mr=#1}{#2}
}
\providecommand{\href}[2]{#2}

%%%%%%%%%%%%%%%%%%%%%%%%%%%%%%%%%%%%%%%%%%%%%%%%%%

\end{document}